\documentclass[11pt]{article}
\usepackage{fullpage}
\usepackage{amsmath,amssymb,amsthm,bbm}
\usepackage{algorithm,algpseudocode}
\usepackage{graphicx,subfigure}
\usepackage{threeparttable}
\usepackage{enumerate}
\usepackage{multirow}
\usepackage{url}
\usepackage{bm}
\usepackage{color}
\usepackage{mathtools}
\usepackage{tikz}
\usepackage{array}
\usepackage{tabu}

\graphicspath{{figs/}}

\newcommand{\Romannumber}[1]{\uppercase\expandafter{\romannumeral #1}}

\newcommand{\real}{\mathbb{R}}

\newcommand{\besselk}{\mathsf{K}}

\newcommand{\ud}[1]{\underline{#1}}
\renewcommand{\Re}{\operatorname{Re}}

\DeclareMathOperator{\spn}{span}

\DeclareMathOperator{\diag}{diag}

\DeclareMathOperator{\tr}{tr}
\DeclareMathOperator{\nnz}{nz}

\DeclareMathOperator{\erf}{erf}

\DeclareMathOperator{\Prob}{Pr}

\theoremstyle{definition} 
\theoremstyle{remark}     
\theoremstyle{remark}     
\theoremstyle{plain}      \newtheorem{theorem}{Theorem}
\theoremstyle{plain}      
\theoremstyle{plain}      \newtheorem{proposition}[theorem]{Proposition}
\theoremstyle{plain}      \newtheorem{corollary}[theorem]{Corollary}
\theoremstyle{plain}

\algblock{Start}{End}

        {\vskip 10pt \begin{algorithmic}[1]}%
        {\end{algorithmic} \vskip 10pt}

\begin{document}

\title{\textsf{A Posteriori Error Estimate for Computing $\tr(f(A))$ by Using the Lanczos Method}}
\author{Jie Chen\thanks{IBM Thomas J. Watson Research Center. Email: \texttt{chenjie@us.ibm.com}}
  \and Yousef Saad\thanks{Department of Computer Science and Engineering, University of Minnesota. Email: \texttt{saad@umn.edu}}
}
\maketitle

\begin{abstract}
An outstanding problem when computing a function of a matrix, $f(A)$,
by using a Krylov method is to accurately estimate errors when
convergence is slow.  Apart from the case of the exponential function
which has been extensively studied in the past, there are no
well-established solutions to the problem.  Often the quantity of
interest in applications is not the matrix $f(A)$ itself, but rather,
matrix-vector products or bilinear forms. When the computation related
to $f(A)$ is a building block of a larger problem (e.g., approximately
computing its trace), a consequence of the lack of reliable error
estimates is that the accuracy of the computed result is unknown. In
this paper, we consider the problem of computing $\tr(f(A))$ for a
symmetric positive-definite matrix $A$ by using the Lanczos method and
make two contributions: (i) we propose an error estimate for the
bilinear form associated with $f(A)$, and (ii) an error estimate for
the trace of $f(A)$.  We demonstrate the practical usefulness of these
estimates for large matrices and in particular, show that the trace
error estimate is indicative of the number of accurate digits. As an
application, we compute the log-determinant of a covariance matrix in
Gaussian process analysis and underline the importance of error
tolerance as a stopping criterion, as a means of bounding the number
of Lanczos steps to achieve a desired accuracy.
\end{abstract}

\section*{keywords}
Matrix trace, matrix function, Lanczos method, error estimate, 
confidence interval

\section{Introduction}
The trace  of a function of  a matrix, $\tr(f(A))$, occurs  in diverse
areas   including  scientific   computing,  statistics,   and  machine
learning~\cite{Bai1996,Golub1997,Stein1999,Estrad2000,Rasmussen2006,Bekas2007,GP-SAA,score.func,Stathopoulos2013,Han2015,Wu2016,Lininpress}. Often in applications, the matrix $A$ is so large that explicitly forming
$f(A)$ is not  a practically viable option. In this  work, we focus on
the case when $A\in\real^{n\times  n}$ is symmetric positive-definite,
so that      it     admits     a      spectral     decomposition
$Q^TAQ=\diag(\lambda_1,\ldots,\lambda_n)$, where the $\lambda_i$'s are
real  positive  eigenvalues  and  $Q$  is  the  matrix  of  normalized
eigenvectors.
Naturally, $f$ must be (at least) defined on the spectrum of $A$, although for analysis, we will assume that $f$ is analytic inside some contour enclosing the spectrum.
Then,  $\tr(f(A))$  is  nothing  but  the  sum  of  the
$f(\lambda_i)$'s. Computing the trace in this manner, however, requires
the  computation   of  all   the  eigenvalues,   which  is   also often
prohibitively   expensive.  Hence,   various   methods  proposed   for
approximately  computing  $\tr(f(A))$  consist of  the  following  two
ingredients~\cite{Girard1987,Hutchinson1990,Bai1996,Avron2011,Han2015,Lin2015,Han2016,Fika2016,Fika2016a}.
\begin{enumerate}
\item Approximate the trace of $f(A)$ by using the average of unbiased samples $u_i^Tf(A)u_i$, $i=1,\ldots,N$, where the $u_i$'s are independent random vectors of some nature.
\item Approximately compute  the bilinear form $u_i^Tf(A)u_i$ 
by using some numerical technique.
\end{enumerate}

The various methods differ in the  random mechanism of selecting the $u_i$'s and
the  numerical technique  for computing  the bilinear  form.
Several variants of these ingredients exist (e.g., computing deterministically $\tr(f(A))=\sum_{i=1}^ne_i^Tf(A)e_i$ rather than using random vectors $u_i$, or even using block vectors to replace the canonical vectors $e_i$~\cite{Bellalij2015}; or using moment extrapolation for particularly $f(t)=t^{\alpha}$ with real value $\alpha$~\cite{Meurant2009,Brezinski2012}), but they are not the focus of this work.
In  the two ingredients, the convergence of  the approximation is generally gauged through
a combination of Monte Carlo convergence of the sample average and the
convergence  of the  numerical  technique. In  practice, however,  the
convergence               results   obtained for                these
methods~\cite{Avron2011,Wimmer2014,Roosta-Khorasani2015,Han2015,Han2016}
rarely translate  into  practical  schemes to monitor
convergence.  
Monitoring  the accuracy of  a given approximation to $\tr(f(A))$ can be
very challenging for certain functions $f$.
This is in complete contrast to the situation prevalent when solving
linear systems, where simple residual norms provide a computable 
measure of the backward error. 
The practical question we woud like to address is given the number  $N$ of random vectors  and a
stopping criterion  for the  bilinear form, how  accurate is
the computed result?

The idea  is to obtain  tight \emph{a-posteriori error  estimates} for
both of the ingredients mentioned above  and then to combine these estimates. 
For  the sample
average, we  will establish  confidence intervals.
The  computed  approximation  (a  \emph{point
  estimate} in  a statistics language)  to $\tr(f(A))$ alone  does not
carry any information on accuracy; however, combined with a confidence
interval  (an   \emph{interval  estimate}),  it  gives   a  notion  of
absolute/relative error with (high) probability. The difference with
standard
statistics, on  the other  hand,  is that each  sample itself  bears a
numerical error. Hence, for the approximation of the bilinear form, we
impose  a stopping  criterion---error tolerance  $\delta$---and inject
$\delta$ into  the confidence  interval. The confidence  interval thus
indicates that with a certain  (high) probability, the trace approximation
error is bounded  by some expression in terms of  $N$ and $\delta$. As
we will demonstrate in experiments, this bound is generally indicative
of the number of accurate digits.

Then, monitoring the error in the approximation of the bilinear form
is crucial for an accurate understanding of the overall error in the
trace. The monitoring mechanism 
must depend on the  approximation method used.  In this work we focus on the Lanczos
method, which  has many appealing properties and which has long been a
preferred technique for approximating full or partial  spectra of large
symmetric matrices, through inexpensive matrix-vector multiplications. An outstanding problem, however, is that
 good extensions of the a-posteriori error estimate 
given in Saad~\cite{Saad1992} to more general functions 
 than the exponential are rare. The Lanczos method can be considered a polynomial
approximation technique, where $f$ is approximated by a polynomial
that interpolates $f$ on the Ritz values, but it typically converges twice as fast
as  other polynomial approximation methods (e.g., Chebyshev
approximation)~\cite[Chapter 19]{Trefethen2012}. Such a faster
convergence is owed to the Gauss-quadrature interpretation that will be
discussed shortly.

Let us briefly review the Lanczos method. It begins with a unit vector $v_1$ and coefficient $\beta_1=0$ and computes the sequence
of vectors 
\begin{equation}\label{eqn:lanczos}
v_{k+1}=(Av_k - \alpha_kv_k - \beta_kv_{k-1})/\beta_{k+1},
\quad\text{for } k =1,2,\ldots,
\end{equation}
where $\alpha_k=v_k^TAv_k$, and $\beta_{k+1}$ is a normalization factor such that $v_{k+1}$ has a unit norm. 
After $m$ steps, the above iteration results in the matrix identity
\begin{equation}\label{eqn:AVVT}
AV_m=V_mT_m+\beta_{m+1}v_{m+1}e_m^T,
\end{equation}
where
\[
V_m=[v_1,v_2,\ldots,v_m] \quad\text{and}\quad
T_m=\begin{bmatrix}
\alpha_1 & \beta_2 \\
\beta_2 & \alpha_2 & \beta_3 \\
& \beta_3 & \alpha_3 & \ddots \\
& & \ddots & \ddots & \beta_m\\
& & & \beta_m & \alpha_m
\end{bmatrix}.
\]
In exact  arithmetic, the columns  of $V_m$, together  with $v_{m+1}$,
consist of  an orthonormal  basis of  the Krylov  subspace $\spn\{v_1,
Av_1,  \ldots, A^{m-1}v_1,  A^mv_1\}$, and  the symmetric  tridiagonal
matrix  $T_m$  is sometimes  called  the  \emph{Jacobi matrix}.  Then,
omitting the  index $i$ in  the random  vector $u_i$ for  clarity, the
Lanczos  method approximates  the bilinear  form $u^Tf(A)u$  through a
projection       on       the       Krylov       subspace;       i.e.,
$u^TV_mf(V_m^TAV_m)V_m^Tu$. If the starting  Lanczos vector $v_1$ is a
normalized $u$, then this quantity is simply $\|u\|^2e_1^Tf(T_m)e_1$.

The same approximate quantity may be derived from a different viewpoint. 
Based on the spectral decomposition of $A$, one may write
\begin{equation}\label{eqn:int}
u^Tf(A)u=\sum_{i=1}^nf(\lambda_i){\omega_i^2=\int f(\lambda)\,d\omega}(\lambda),
\end{equation}
where the $\omega_i$'s are elements of the vector $Q^Tu$, $\omega(\lambda)$
is a discrete measure with masses $\omega_i^2$ at the atoms $\lambda_i$,
and the integral is a Stieltjes integral. One may
show~\cite{Saad2003,Golub2009} that there is a sequence of 
polynomials $p_k(\lambda)$  associated with this process satisfying the relation
$v_k=p_{k-1}(A)v_1$, $k=1,2,\ldots,m$, that are orthonormal with respect to the measure $\omega(\lambda)$. Then,
applying  the  Golub--Welsch   algorithm~\cite{Golub1969},  the  Gauss
quadrature rule for the  integral~\eqref{eqn:int} uses the eigenvalues
of $T_m$  as  the quadrature points,  and the
square of the  first element of the normalized  eigenvectors of $T_m$,
multiplied by $\int d\omega=\|u\|^2$, as the quadrature weights. In other
words,  writing  the   spectral  decomposition  $S^TT_mS=\Theta$,  the
quadrature rule gives
\[
\int f(\lambda)\,d\omega(\lambda)
\approx \sum_{k=1}^m \underbrace{\|u\|^2S_{1k}^2}_{\text{weights}}f(\underbrace{\Theta_{kk}}_{\text{points}}),
\]
which coincides with $\|u\|^2e_1^Tf(T_m)e_1$.

The quadrature interpretation is  particularly useful for establishing
exponential convergence of  the Lanczos method, if  $f$ admits certain
analytic properties (see,  e.g., Trefethen\cite{Trefethen2012}). Interestingly,
other  related quadrature  rules, in  a combined  use, may  also 
yield
bounds~\cite{Golub2009}. For  example, if the even  derivatives of $f$
in the spectrum interval have a constant sign, then the Gauss rule and
the  Gauss--Lobatto rule  always  give  results on  the  two sides  of
$u^Tf(A)u$. Similarly, if the odd  derivatives of $f$ have a constant
sign, then  the two results  of the Gauss--Radau rule  always straddle
around  the bilinear form. Moreover,  bounds  for  the particular case when $f$ is a
rational function%
\footnote{Note  that being  a rational  function is  not a  particular
  restriction,  because rational  approximations  are one  of the  key
  tools  for  computing  matrix  functions. For  more  discussions  on
  rational  approximations,  see Section~\ref{sec:rat.approx}.  For  a
  connection     between     the      error     estimation     methods
  in~\cite{Frommer2009,Frommer2013}  and  ours,   see  the  concluding
  section.}  were also  proposed~\cite{Frommer2009,Frommer2013}, based on similar ideas  of Gauss and
Gauss-related quadratures. In practice,
however, these bounds are often too conservative as an error estimate,
especially when convergence is slow. Thus, a contribution of this work
is   a   more   accurate   error   estimate   of   the   approximation
$\|u\|^2e_1^Tf(T_m)e_1$ to $u^Tf(A)u$. This  estimate is directly used
to check against the  aforementioned tolerance $\delta$ for monitoring
progress.

It  is noteworthy  to relate this work with a  prior
work~\cite{Hutchinson}  by the  first  author, who  studied the  trace
error by using a similar approach,  and to underline a distinction between
the two contributions.  Both works consider the combination
of statistical error caused by  sample average and the numerical error
in  evaluating  the  bilinear  form  $u^T f(A) u$.  In  quantifying  the
statistical error,  the prior work  exploited a variance  term defined
through  an estimator,  which is  applicable only  to random  Gaussian
vectors.  In this  work, the variance  is the  sample variance
(albeit carrying numerical error) and  thus there is no restriction on
the random  mechanism of  the random vectors.  This distinction  has a
consequence on the handling of  numerical error. In order to establish
a confidence  interval, the previous work proposed a  stopping criterion
for the approximation of the function%
\footnote{In  fact, the prior work  also discussed  the special  case
  $f(t)=t^{-1}$,  wherein  the  stopping  criterion  is  cast  on  the
  residual of the linear system instead.}  $f$ such that the numerical
error is comparable with the statistical  error. On the other hand, in
this work,  we allow any  tolerance $\delta$ for the  approximation of
the bilinear  form $u^Tf(A)u$,  because $\delta$  is written  into the
confidence interval. Of  course, one may find  an appropriate $\delta$
that makes the two sources of errors comparable, in a post-hoc manner,
but this benefit comes only as a by-product. Nevertheless, the post-hoc
adjustment of $\delta$ is practically useful; see the next section.

\section{Confidence interval with numerical error}
To establish a confidence interval that incorporates numerical errors in the samples, let us first define some notation. Denote by
\begin{enumerate}
\item $\mu:=\tr(f(A))$, the mean;
\item $x_i:=u_i^Tf(A)u_i$, an independent, unbiased sample; and
\item $x_i^{(m)}:=\|u_i\|^2e_1^Tf(T_m^{(i)})e_1$, a sample with numerical error,
\end{enumerate}
where we have added a superscript $(i)$ to the Jacobi matrix to distinguish different samples. Formally, the trace approximation method considered in this work refers to approximating the mean $\mu$ by using the sample average
\[
\bar{x}^{(m)}:=\frac{1}{N}\sum_{i=1}^Nx_i^{(m)}.
\]
The corresponding sample standard error is
\[
s^{(m)}:=\sqrt{\frac{1}{N-1}\sum_{i=1}^N\left(x_i^{(m)}-\bar{x}^{(m)}\right)^2}.
\]

In standard statistics, one may establish confidence intervals for only the average of the samples \emph{without} numerical error/bias:
\[
\bar{x}:=\frac{1}{N}\sum_{i=1}^Nx_i.
\]
Hence, defining the standard error
\[
s:=\sqrt{\frac{1}{N-1}\sum_{i=1}^N(x_i-\bar{x})^2},
\]
for any $\alpha>0$, we let
\begin{equation}\label{eqn:conf.intv}
p_{\alpha}:=\Prob\left(|\bar{x}-\mu|\le\frac{\alpha s}{\sqrt{N}}\right).
\end{equation}
The parameter $\alpha$ 
is to be interpreted as a ``multiple of the  standard error,''
 and the associated probability $p_{\alpha}$ is one minus the significance level. When $N$ is sufficiently large (e.g., $N\ge30$), by the central limit theorem, the standardized error $\sqrt{N}|\bar{x}-\mu|/s$ approximately follows the standard normal distribution. Hence, $p_{\alpha}$ is approximately the probability of a Gaussian sample whose absolute value is no greater than $\alpha$, i.e.,
\[
p_{\alpha}\approx\erf(\alpha/\sqrt{2}),
\]
where $\erf$ is the error function.

The main result of this section is a probability estimate resembling~\eqref{eqn:conf.intv},
for the sample average $\bar{x}^{(m)}$ \emph{with} numerical error.

\begin{theorem}\label{thm:bound}
Suppose the sample bias is bounded by some $\delta>0$; that is, $|x_i-x_i^{(m)}|\le\delta$ for all $i$, then
\[
\Prob\left\{|\bar{x}^{(m)}-\mu|\le\frac{\alpha}{\sqrt{N}}\left(s^{(m)}+\delta\sqrt{\frac{N}{N-1}}\right)
+\delta\right\}\ge p_{\alpha}.
\]
\end{theorem}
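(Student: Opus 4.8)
The plan is to reduce the statement to the classical confidence bound~\eqref{eqn:conf.intv} via a deterministic (pathwise) set containment, so that the probability is inherited directly. First I would apply the triangle inequality to the sample means,
\[
|\bar{x}^{(m)}-\mu|\le|\bar{x}-\mu|+|\bar{x}^{(m)}-\bar{x}|,
\]
and note that, since $|x_i-x_i^{(m)}|\le\delta$ for every $i$ by hypothesis,
\[
|\bar{x}^{(m)}-\bar{x}|=\Big|\tfrac{1}{N}\sum_{i=1}^N(x_i^{(m)}-x_i)\Big|\le\tfrac{1}{N}\sum_{i=1}^N|x_i^{(m)}-x_i|\le\delta .
\]
Hence $|\bar{x}^{(m)}-\mu|\le|\bar{x}-\mu|+\delta$ holds on every realization of the random vectors.

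Next I would relate the two standard errors. Collecting the centered data into vectors $y:=(x_i-\bar{x})_{i=1}^N$ and $y^{(m)}:=(x_i^{(m)}-\bar{x}^{(m)})_{i=1}^N$ in $\real^N$, we have $s=\|y\|_2/\sqrt{N-1}$ and $s^{(m)}=\|y^{(m)}\|_2/\sqrt{N-1}$, and each of $y,y^{(m)}$ is obtained from the raw sample vector $(x_i)_{i=1}^N$, respectively $(x_i^{(m)})_{i=1}^N$, by applying the mean-removal operator $P:=I-\tfrac1N\ones\ones^T$. Since $P$ is an orthogonal projector it is nonexpansive in the Euclidean norm, so
\[
\|y-y^{(m)}\|_2=\big\|P\big[(x_i)_{i=1}^N-(x_i^{(m)})_{i=1}^N\big]\big\|_2\le\big\|(x_i)_{i=1}^N-(x_i^{(m)})_{i=1}^N\big\|_2\le\sqrt{N}\,\delta ,
\]
the last inequality because every coordinate of that difference is at most $\delta$ in magnitude. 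The triangle inequality in $\real^N$ then yields $\|y\|_2\le\|y^{(m)}\|_2+\sqrt{N}\,\delta$, i.e.
\[
s\le s^{(m)}+\delta\sqrt{\tfrac{N}{N-1}},
\]
again on every realization.

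Finally I would glue the two pathwise bounds together. On the event $E:=\{\,|\bar{x}-\mu|\le\alpha s/\sqrt{N}\,\}$, whose probability is exactly $p_\alpha$ by~\eqref{eqn:conf.intv}, the two inequalities above give
\[
|\bar{x}^{(m)}-\mu|\le\delta+\frac{\alpha s}{\sqrt{N}}\le\delta+\frac{\alpha}{\sqrt{N}}\Big(s^{(m)}+\delta\sqrt{\tfrac{N}{N-1}}\Big),
\]
so $E$ is contained in the target event, which therefore has probability at least $p_\alpha$ by monotonicity of measure.

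The hard part is not the probabilistic reasoning---it collapses to a one-line set inclusion---but the sharp comparison of the standard errors: a naive estimate $|y_i-y_i^{(m)}|\le|x_i-x_i^{(m)}|+|\bar{x}-\bar{x}^{(m)}|\le2\delta$ would inflate the correction to $2\delta\sqrt{N/(N-1)}$. Recognizing that centering is an orthogonal projection, hence a Euclidean contraction, is what recovers the tight constant $\delta\sqrt{N/(N-1)}$ quoted in the theorem; this is the one step worth spelling out carefully, while everything else is routine.
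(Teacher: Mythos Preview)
Your proof is correct and follows essentially the same route as the paper's. The only cosmetic difference is in justifying $\|y-y^{(m)}\|_2\le\sqrt{N}\,\delta$: the paper writes out the identity $\sum_i(\delta_i-\bar\delta)^2=\sum_i\delta_i^2-N\bar\delta^2\le N\delta^2$ directly, whereas you invoke the nonexpansiveness of the centering projector $P=I-\tfrac{1}{N}\ones\ones^T$; these are the same inequality.
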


\begin{proof}
  Let                   $x_i-x_i^{(m)}=\delta_i$                   and
  $\bar{x}-\bar{x}^{(m)}=\bar{\delta}$. Form a vector $a$
  with elements $x_i^{(m)}-\bar{x}^{(m)}$ and  another vector $b$ with
  elements  $\delta_i-\bar{\delta}$.  Note   that  $a+b$  consists  of
  elements    $x_i-\bar{x}$.    Then,    the    triangle    inequality
  $\|a+b\|\le\|a\|+\|b\|$ translates to
\[
s\sqrt{N-1} \le s^{(m)}\sqrt{N-1}+\sqrt{\sum_{i=1}^N\left(\delta_i-\bar{\delta}\right)^2}.
\]
Because $|\delta_i|\le\delta$, we have
\[  
\sum_{i=1}^N\left(\delta_i-\bar{\delta}\right)^2
=\sum_{i=1}^N\delta_i^2-N\bar{\delta}^2 
\le   N \delta^2 - N\bar{\delta}^2  \le  N \delta^2 .
\]
Hence,
\[
s\le s^{(m)}+\delta\sqrt{\frac{N}{N-1}}.
\]
Therefore, based on~\eqref{eqn:conf.intv}, we have that with probability greater than $p_{\alpha}$,
\[
|\bar{x}^{(m)}-\mu|\le|\bar{x}-\mu|+|\bar{x}-\bar{x}^{(m)}|
\le\frac{\alpha s}{\sqrt{N}}+|\bar{\delta}|
\le\frac{\alpha}{\sqrt{N}}\left(s^{(m)}+\delta\sqrt{\frac{N}{N-1}}\right)
+\delta,
\]
which concludes the proof.
\end{proof}

Theorem~\ref{thm:bound} gives a computable bound. For any reasonable $\alpha$ (e.g., $3$, which translates to a probability $p_{\alpha}\approx99.73\%$), the error of the sample average $\bar{x}^{(m)}$ is bounded by an expression that involves only the number $N$ of samples, the error tolerance $\delta$, and the standard error $s^{(m)}$. In later experiments, we will use this bound to assess the quality of approximation and show that it is indicative of the true error.

One  may be  interested  in  an appropriate  $\delta$  that makes  the
numerical error comparable with the statistical one. A natural idea is
to let the  tolerance $\delta$ be approximately  the statistical error
bound  $\alpha   s^{(m)}/\sqrt{N}$, or $\beta\alpha   s^{(m)}/\sqrt{N}$ for some small $\beta$ (e.g., $0.1$).  The  following  result   gives  a
straightforward bound  that bypasses the reliance on  $\delta$ for
such a  case; the  overall error  increases to  approximately $(1+\beta)\alpha
s^{(m)}/\sqrt{N}$. This  result may  be used  to adjust  the tolerance
$\delta$ given  the standard  error $s^{(m)}$  obtained in  a previous
calculation.

\begin{corollary}\label{cor:bound}
Under the conditions of Theorem~\ref{thm:bound}, for any $\beta>0$, if $\delta\le\beta\alpha s^{(m)}/\sqrt{N}$, then
\[
\Prob\left\{|\bar{x}^{(m)}-\mu|\le\frac{\alpha s^{(m)}}{\sqrt{N}}\left(1+\beta+\frac{\beta\alpha}{\sqrt{N-1}}\right)\right\}\ge p_{\alpha}.
\]
\end{corollary}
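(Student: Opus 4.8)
The plan is to derive the corollary as an immediate consequence of Theorem~\ref{thm:bound} by substituting the hypothesis $\delta \le \beta\alpha s^{(m)}/\sqrt{N}$ into the bound already established. First I would invoke Theorem~\ref{thm:bound} verbatim, obtaining that with probability at least $p_\alpha$,
\[
|\bar{x}^{(m)}-\mu| \;\le\; \frac{\alpha}{\sqrt{N}}\left(s^{(m)}+\delta\sqrt{\frac{N}{N-1}}\right)+\delta.
\]
The key observation is that the right-hand side is monotonically increasing in $\delta$ (each term has a nonnegative coefficient), so replacing $\delta$ by its upper bound $\beta\alpha s^{(m)}/\sqrt{N}$ can only enlarge the bound; this preserves the probability statement since the event on the left only gets weaker.

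Next I would carry out the substitution term by term. The leading term $\alpha s^{(m)}/\sqrt{N}$ is untouched. The middle term becomes
\[
\frac{\alpha}{\sqrt{N}}\cdot\frac{\beta\alpha s^{(m)}}{\sqrt{N}}\cdot\sqrt{\frac{N}{N-1}}
=\frac{\beta\alpha^2 s^{(m)}}{\sqrt{N}\,\sqrt{N-1}}
=\frac{\alpha s^{(m)}}{\sqrt{N}}\cdot\frac{\beta\alpha}{\sqrt{N-1}},
\]
and the additive $\delta$ becomes at most $\beta\alpha s^{(m)}/\sqrt{N}$. Summing the three contributions and factoring out $\alpha s^{(m)}/\sqrt{N}$ yields exactly
\[
\frac{\alpha s^{(m)}}{\sqrt{N}}\left(1+\beta+\frac{\beta\alpha}{\sqrt{N-1}}\right),
\]
which is the claimed bound.

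There is no real obstacle here: the argument is a one-line algebraic manipulation of an inequality already proved. The only point worth stating explicitly — to make the logic airtight — is the monotonicity remark above, which justifies why an upper bound on $\delta$ (rather than an exact value) suffices to transfer the probability estimate. Everything else is routine arithmetic and factoring.
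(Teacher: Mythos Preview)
Your proposal is correct and matches the paper's own proof essentially line for line: substitute the assumed upper bound on $\delta$ into the right-hand side of Theorem~\ref{thm:bound}, simplify, and observe that enlarging the bound only increases the probability. The explicit monotonicity-in-$\delta$ remark you make is exactly the paper's ``enlarging an upper bound increases the probability'' step.
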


\begin{proof}
Clearly, if $\delta$ satisfies the stated condition, then
\[
\frac{\alpha}{\sqrt{N}}\left(s^{(m)}+\delta\sqrt{\frac{N}{N-1}}\right)+\delta
\le
\frac{\alpha s^{(m)}}{\sqrt{N}}\left(1+\beta+\frac{\beta\alpha}{\sqrt{N-1}}\right).
\]
The proof ends by noting that enlarging an upper bound increases the probability.
\end{proof}

\section{Error estimation of bilinear form}\label{sec:bilinear}
With the  confidence interval established  in Theorem~\ref{thm:bound},
we  now  consider how  to  reliably  estimate the  difference  between
$u^Tf(A)u$  and $\|u\|^2e_1^Tf(T_m)e_1$,  because this  error will  be
used  to  check against  the  tolerance  $\delta$ for  convergence.  A
challenge in computing  matrix functions based on  the Lanczos method
is that  good error estimates are  hardly known, except for  the simple
case  of   the  exponential~\cite{Saad1992}   because  of   its  fast
convergence.         Several         error         bounds         were
proposed~\cite{Frommer2009,Frommer2013} but they are generally pessimistic and may deviate from the
true error  by one or  more order  of magnitudes, when  convergence is
slow. In this section, we propose  a general technique applicable to a
wide variety of functions and also to ill conditioned matrices.

\subsection{Incremental and cumulative error}\label{sec:three.err}
Omitting the common, known factor $\|u\|^2$, we define the quantity of interest
\[
\rho_m:=v_1^Tf(A)v_1-e_1^Tf(T_m)e_1,
\]
where recall that $v_1=u/\|u\|$.
If the Lanczos iteration~\eqref{eqn:lanczos} is run to the end, we have%
\footnote{In the case of breakdown, restart with a new vector orthogonal to the previous Krylov subspace(s). Hence, \eqref{eqn:AVVT.2} always holds, with some $\beta_k$('s) possibly being zero.}
\begin{equation}\label{eqn:AVVT.2}
AV_n=V_nT_n,
\end{equation}
where the subscript $m$ in~\eqref{eqn:AVVT} is replaced by the matrix dimension $n$ and the remainder term vanishes. Therefore, $\rho_m$ is nothing but $e_1^Tf(T_n)e_1-e_1^Tf(T_m)e_1$.

We call $\rho_m$ the \emph{bilinear form error}. In order to quantify this error, we define two additional terms closely related to $\rho_m$:
\begin{enumerate}
\item \emph{cumulative error}: $d_{m,m'}:=e_1^Tf(T_{m'})e_1-e_1^Tf(T_m)e_1$ for $m'>m$;
\item \emph{incremental error}: $d_m:=d_{m,m+1}$.
\end{enumerate}
Clearly, the incremental error accounts for one step of the difference and the cumulative error accumulates the incremental errors for $m'-m$ steps. In other words,
\[
d_{m,m'}=\sum_{i=m}^{m'-1}d_i
\quad\text{for all $m'>m$ and particularly,}\quad
\rho_m=d_{m,n}=\sum_{i=m}^{n-1}d_i.
\]

\subsection{Rational approximation}\label{sec:rat.approx}
To estimate the bilinear form error $\rho_m$, we begin with the incremental error $d_m$, because it can be computed economically without evaluating $f(T_m)$ for every $m$. The idea is to express $f$ with the Cauchy integral
\[
f(a)=\frac{-1}{2\pi i}\int_{\Gamma} f(z)(a-z)^{-1}\,dz,
\]
where $\Gamma$, inside which $f$ is analytic, is a contour enclosing $a$. Let the contour integral be approximated by using a quadrature rule
\[
f(a)\approx\frac{-1}{2\pi i}\sum_{k=1}^K w_kf(z_k)(a-z_k)^{-1},
\]
where $z_k$ and $w_k$ are the quadrature points and weights, respectively. Then, we effectively obtain a rational approximation of $f(a)$:
\begin{equation}\label{eqn:rK}
r_K(a)=\sum_{k=1}^Kc_k(a-z_k)^{-1},
\quad\text{with}\quad
c_k=\frac{-w_kf(z_k)}{2\pi i},
\end{equation}
where the poles $z_k$ are the same as the quadrature points and the coefficients $c_k$ are related to the quadrature weights $w_k$.

This  intricate relationship  between a  rational approximation  and a
contour integral approximated by quadrature  is well known. It
is a  valuable device  for computing  a function of  a matrix  times a
vector, $f(A)b$,  to high  accuracy, because  of the much faster
convergence of  rational approximations compared with polynomial approximations, provided that  shifted linear
systems  with  respect  to  $A$   are  solved  in  a  backward  stable
manner~\cite{Hale2008}. A  challenge for  applying this idea  to large
$A$ in  practice, is that  solving the systems  by using a  direct and
stable method might  not always be a viable option.  Here, we will not
discuss in detail  the pros and cons of various  methods for computing
$f(A)b$, because  the comparison  is irrelevant.  Instead, we  use the
device         as         a         tool         for         analyzing
$d_m=e_1^Tf(T_{m+1})e_1-e_1^Tf(T_m)e_1$.  An appealing  consequence of
the  \emph{very} fast  convergence is  that the  number of  quadrature
points, $K$, need not be large to get sufficiently good approximations.

From a practical stand point, we refer the readers to articles~\cite{Trefethen2006,Hale2008} and references therein for the rational approximations of a wide variety of functions used in applications (including, e.g., the exponential, the logarithm, and the square root). Some approximations are not written in the canonical form~\eqref{eqn:rK}, but we will explain the simple modifications in later experiments. Moreover, because of conjugacy, and because we are interested in real arguments only, the number of summation terms in~\eqref{eqn:rK} may often be reduced by half. Therefore, throughout the paper we assume the following rational approximation:
\begin{equation}\label{eqn:rK2}
r_K(x):=\Re\left\{\sum_{k=1}^Kc_k(x-z_k)^{-1}\right\}, \quad x\in\real.
\end{equation}

Because the spectrum interval of $T_m$ always stays inside that of $A$
(owing to  the interlacing eigenvalue  theorem), it suffices to  use a
contour  that encloses  the spectrum  interval  of $A$  so that  the
incremental  error  $d_m$  is   well  approximated  by  the  following
quantity:
\begin{align}
d_m^K &:=e_1^Tr_K(T_{m+1})e_1-e_1^Tr_K(T_m)e_1 \nonumber\\
&=\Re\left\{\sum_{k=1}^K c_k[e_1^T(T_{m+1}-z_kI)^{-1}e_1-e_1^T(T_m-z_kI)^{-1}e_1]\right\}. \label{eqn:dmK}
\end{align}
The following result is in preparation for an iterative algorithm that
efficiently computes $d_m^K$.

\begin{proposition}\label{prop:dmK}
We have,
\begin{equation}\label{eqn:dmK3}
d_m^K=-\Re\left\{\sum_{k=1}^Kc_k\beta_{m+1}[e_{m+1}^T(T_{m+1}-z_kI)^{-1}e_1][e_m^T(T_m-z_kI)^{-1}e_1]\right\},
\end{equation}
where recall that $\beta_{m+1}$ is the last of the coefficients $\beta_2$, $\beta_3$, \ldots in the Lanczos iteration~\eqref{eqn:lanczos}.
\end{proposition}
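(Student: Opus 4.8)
The plan is to reduce the difference of two $2\times 2$-type bilinear forms $e_1^T(T_{m+1}-z_kI)^{-1}e_1 - e_1^T(T_m-z_kI)^{-1}e_1$ to the compact product form in~\eqref{eqn:dmK3}, term by term in $k$; since everything is linear in the rational terms, it suffices to fix a single pole $z:=z_k$ and prove that
\[
e_1^T(T_{m+1}-zI)^{-1}e_1 - e_1^T(T_m-zI)^{-1}e_1
= -\beta_{m+1}\,[e_{m+1}^T(T_{m+1}-zI)^{-1}e_1]\,[e_m^T(T_m-zI)^{-1}e_1],
\]
then multiply by $c_k$, sum over $k$, and take the real part. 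The natural tool is the block/bordered structure of $T_{m+1}$: writing
\[
T_{m+1}-zI=\begin{bmatrix} T_m-zI & \beta_{m+1}e_m \\ \beta_{m+1}e_m^T & \alpha_{m+1}-z \end{bmatrix},
\]
I would apply the block-inverse (Schur-complement) formula to extract the top-left $m\times m$ block of $(T_{m+1}-zI)^{-1}$ and, in particular, its $(1,1)$ entry. The Schur complement here is the scalar $\gamma := (\alpha_{m+1}-z) - \beta_{m+1}^2\, e_m^T(T_m-zI)^{-1}e_m$, and the block-inverse formula gives the $(1,1)$ entry of $(T_{m+1}-zI)^{-1}$ as
\[
e_1^T(T_m-zI)^{-1}e_1 + \gamma^{-1}\,\beta_{m+1}^2\,[e_1^T(T_m-zI)^{-1}e_m]^2,
\]
using symmetry of $T_m$. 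Subtracting $e_1^T(T_m-zI)^{-1}e_1$ isolates the correction term $\gamma^{-1}\beta_{m+1}^2[e_1^T(T_m-zI)^{-1}e_m]^2$.

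It then remains to identify this correction with $-\beta_{m+1}[e_{m+1}^T(T_{m+1}-zI)^{-1}e_1][e_m^T(T_m-zI)^{-1}e_1]$. For this I would use the block-inverse formula again, now for the $(m+1,1)$ entry of $(T_{m+1}-zI)^{-1}$, i.e.\ the bottom-left block (a row vector) applied against $e_1$: the block formula yields $e_{m+1}^T(T_{m+1}-zI)^{-1}e_1 = -\gamma^{-1}\beta_{m+1}\, e_m^T(T_m-zI)^{-1}e_1$. Substituting this back, the correction term becomes $-\beta_{m+1}\,[e_{m+1}^T(T_{m+1}-zI)^{-1}e_1]\,[e_1^T(T_m-zI)^{-1}e_m]$, and one last use of symmetry ($e_1^T(T_m-zI)^{-1}e_m = e_m^T(T_m-zI)^{-1}e_1$) gives exactly the claimed identity for a single pole. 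Multiplying by $c_k$, summing, and taking $\Re\{\cdot\}$ reproduces~\eqref{eqn:dmK3}.

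The main obstacle, such as it is, is bookkeeping rather than depth: one must track the embedding of $m$-dimensional vectors into $\real^{m+1}$ (the $e_m$ appearing in the border versus the $e_m$ inside $T_m$), keep the Schur complement $\gamma$ nonzero (it is, whenever $z$ is off the spectrum of $T_{m+1}$, which holds since the contour $\Gamma$ encloses the spectrum of $A$ and hence strictly contains that of every $T_j$ by interlacing), and invoke symmetry of $T_m$ at the two places where a $(1,m)$ entry is swapped for an $(m,1)$ entry. A secondary point worth a sentence in the write-up is that $\gamma$ cancels between the two applications of the block formula, so the final expression is pole-wise a clean product with no leftover Schur-complement factor — which is precisely what makes an efficient recursive update of $d_m^K$ possible, as promised by the surrounding text.
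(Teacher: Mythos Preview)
Your proposal is correct and is essentially the same argument as the paper's own proof: both apply the block-inverse (Schur-complement) identity to the bordered matrix $T_{m+1}-zI=\left[\begin{smallmatrix}T_m-zI & \beta_{m+1}e_m\\ \beta_{m+1}e_m^T & \alpha_{m+1}-z\end{smallmatrix}\right]$, read off the $(1,1)$ correction term, and then identify the bottom-left block of the inverse with $e_{m+1}^T(T_{m+1}-zI)^{-1}e_1$ to obtain the product form. Your additional remarks on the nonvanishing of the Schur complement and the cancellation of $\gamma$ are accurate and worth keeping.
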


\begin{proof}
Note that $T_m-z_kI$ is a top-left block of $T_{m+1}-z_kI$. Thus, we are seeking the difference between the $(1,1)$ element of the inverse of a matrix and that of its top-left block. Recall the following identity:
\[
\begin{bmatrix}R & S \\ T & U\end{bmatrix}^{-1}
=\begin{bmatrix}R^{-1}+R^{-1}SQTR^{-1} & -R^{-1}SQ \\ -QTR^{-1} & Q\end{bmatrix},
\quad\text{with}\quad
Q=(U-TR^{-1}S)^{-1}.
\]
It indicates that the difference between the $(1,1)$ element of $\left[\begin{smallmatrix}R & S \\ T & U\end{smallmatrix}\right]^{-1}$ and that of $R^{-1}$ is $e_1^TR^{-1}SQTR^{-1}e_1$.

In our setting, $\left[\begin{smallmatrix}R & S \\ T & U\end{smallmatrix}\right]=T_{m+1}-z_kI$ and $R=T_m-z_kI$. Therefore, $S=\beta_{m+1}e_me_1^T$ and thus
\[
e_1^TR^{-1}SQTR^{-1}e_1=\beta_{m+1}(e_1^TR^{-1}e_m)(e_1^TQTR^{-1}e_1).
\]
Clearly, $e_1^TR^{-1}e_m=e_1^T(T_m-z_kI)^{-1}e_m=e_m^T(T_m-z_kI)^{-1}e_1$ and $e_1^TQTR^{-1}e_1=-e_{m+1}^T(T_{m+1}-z_kI)^{-1}e_1$. Hence,
\[
e_1^T(T_{m+1}-z_kI)^{-1}e_1-e_1^T(T_m-z_kI)^{-1}e_1=\\
-\beta_{m+1}[e_{m+1}^T(T_{m+1}-z_kI)^{-1}e_1][e_m^T(T_m-z_kI)^{-1}e_1],
\]
which concludes the proof.
\end{proof}

For  conciseness, in  what  follows, the  three  errors introduced  in
Section~\ref{sec:three.err}  may mean  either  the originally  defined
terms,     or    the     approximated    terms     through    rational
approximation~\eqref{eqn:rK2}. This  abuse of language will  not cause
confusion in the current context. The approximated terms have a superscript $K$
attached to the notation, just like $d_m^K$.
The following result states that the error in the approximated terms is always bounded by two times the uniform error between $f$ and $r_K$.

\begin{theorem}\label{thm:rational.err}
Let $\|f-r_K\|_{\infty}:=\max_{x\in[\lambda_{\min},\lambda_{\max}]}|f(x)-r_K(x)|=\epsilon$, where $\lambda_{\min}$ and $\lambda_{\max}$ are the smallest and largest eigenvalues of $A$, respectively. For any $m$ and $m'>m$ \textup{(}including the case of incremental error $m'=m+1$ and bilinear form error $m'=n$\textup{)}, the cumulative error admits
\[
|d_{m,m'}-d_{m,m'}^K|\le2\epsilon.
\]
\end{theorem}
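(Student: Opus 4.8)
The plan is to reduce the difference $d_{m,m'}-d_{m,m'}^K$ to two scalar quadratic forms in the matrix $(f-r_K)$ evaluated at Jacobi matrices, and to bound each of them by $\epsilon$ using a spectral decomposition together with the interlacing eigenvalue theorem.

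First I would write, directly from the definitions of the cumulative error and of its rational counterpart (for which the superscript $K$ simply replaces $f$ by $r_K$),
\[
d_{m,m'}-d_{m,m'}^K = e_1^T\bigl[f(T_{m'})-r_K(T_{m'})\bigr]e_1 - e_1^T\bigl[f(T_m)-r_K(T_m)\bigr]e_1,
\]
so that by the triangle inequality it suffices to prove $\bigl|e_1^T\bigl(f-r_K\bigr)(T_\ell)e_1\bigr|\le\epsilon$ for $\ell=m$ and for $\ell=m'$ (the two stated special cases, $m'=m+1$ and $m'=n$, are covered automatically). Fixing such an $\ell$, I would use the spectral decomposition $T_\ell=S\Theta S^T$ with $S$ orthogonal and $\Theta=\diag(\theta_1,\dots,\theta_\ell)$, which gives $(f-r_K)(T_\ell)=S\,(f-r_K)(\Theta)\,S^T$ and hence
\[
e_1^T(f-r_K)(T_\ell)e_1 = \sum_{j=1}^{\ell}\bigl(f(\theta_j)-r_K(\theta_j)\bigr)\,S_{1j}^2 .
\]
Since $S_{1j}^2\ge 0$ and $\sum_{j=1}^\ell S_{1j}^2=\|e_1\|^2=1$, the right-hand side is a convex combination of the numbers $f(\theta_j)-r_K(\theta_j)$, so its absolute value is at most $\max_j|f(\theta_j)-r_K(\theta_j)|$. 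By the interlacing eigenvalue theorem every Ritz value $\theta_j$ lies in $[\lambda_{\min},\lambda_{\max}]$ (the same holds for $T_n$, whose eigenvalues form a subset of the spectrum of $A$, even allowing breakdown and restart), whence $\max_j|f(\theta_j)-r_K(\theta_j)|\le\|f-r_K\|_{\infty}=\epsilon$. Adding the two such bounds yields $|d_{m,m'}-d_{m,m'}^K|\le 2\epsilon$.

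I do not expect a substantial obstacle here; the only points needing care are (i) that $r_K$ is analytic on a neighbourhood of $[\lambda_{\min},\lambda_{\max}]$ so that $r_K(T_\ell)$ is well defined --- which holds because the poles $z_k$ lie on the contour $\Gamma$, strictly away from the spectrum interval --- and (ii) the appeal to interlacing to keep all Ritz values inside $[\lambda_{\min},\lambda_{\max}]$, which is exactly the fact already invoked in the paragraph preceding~\eqref{eqn:dmK}. Everything else is the telescoping identity and the triangle inequality.
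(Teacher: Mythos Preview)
Your proposal is correct and follows essentially the same route as the paper's proof: rewrite $d_{m,m'}-d_{m,m'}^K$ as a difference of two terms of the form $e_1^T(f-r_K)(T_\ell)e_1$, bound each by $\epsilon$ via the spectral decomposition of $T_\ell$ and the fact that $S^Te_1$ has unit norm, then apply the triangle inequality. Your write-up is slightly more explicit than the paper's (spelling out the convex-combination structure and the appeal to interlacing), but the argument is the same.
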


\begin{proof}
Let the spectral decomposition of $T_m$ be $S\Theta S^T$. Then
\[
|e_1^Tf(T_m)e_1-e_1^Tr_K(T_m)e_1|=|e_1^TS[f(\Theta)-r_K(\Theta)]S^Te_1|
\le\|f-r_K\|_{\infty}=\epsilon,
\]
where the inequality comes from the fact that the vector $S^Te_1$ has a unit 2-norm. Since this inequality holds for all $m$, we have
\begin{align*}
|d_{m,m'}-d_{m,m'}^K|
&=|[e_1^Tf(T_{m'})e_1-e_1^Tf(T_m)e_1]-[e_1^Tr_K(T_{m'})e_1-e_1^Tr_K(T_m)e_1]|\\
&\le|e_1^Tf(T_{m'})e_1-e_1^Tr_K(T_{m'})e_1|+|e_1^Tf(T_m)e_1-e_1^Tr_K(T_m)e_1|
\le2\epsilon,
\end{align*}
which concludes the proof.
\end{proof}

\subsection{Iterative algorithm for computing the incremental error}
\label{sec:incre.err}
With Proposition~\ref{prop:dmK}, if we define
\begin{equation}\label{eqn:eta.m.k}
\eta_m^k:=e_m^T(T_m-z_kI)^{-1}e_1,
\end{equation}
then the incremental error~\eqref{eqn:dmK3} is simplified as
\begin{equation}\label{eqn:dmK2}
d_m^K=-\Re\left\{\sum_{k=1}^Kc_k\beta_{m+1}\eta_{m+1}^k\eta_m^k\right\}.
\end{equation}
Hence, an efficient computation of $d_m^K$ comes from an iterative technique that economically computes $\eta_{m+1}^k$ based on $\eta_m^k$.

For convenience,  we temporarily  omit the  index $k$
that distinguishes  between different poles.  They will return  at the
end of  this subsection.   We seek an  inexpensive update  formula for
$\eta_{m+1}$   based  on   $\eta_m$.   Assume   an  LU   factorization
$T_m-zI=L_mU_m$ and  let $u_m$ be  the bottom-right corner  element of
$U_m$. Then for one additional step, we have
\[
T_{m+1}-zI
=\begin{bmatrix}T_m-zI & \beta_{m+1}e_m \\ \beta_{m+1}e_m^T & \alpha_{m+1}-z\end{bmatrix}
=\begin{bmatrix}L_m & \\ (\beta_{m+1}/u_m)e_m^T & 1\end{bmatrix}
\begin{bmatrix}U_m & \beta_{m+1}e_m \\ & u_{m+1}\end{bmatrix},
\]
where
\begin{equation}\label{eqn:um1}
u_{m+1}=\alpha_{m+1}-z-\beta_{m+1}^2/u_m.
\end{equation}
Because
\[
\eta_m=e_m^T(T_m-zI)^{-1}e_1=e_m^TU_m^{-1}L_m^{-1}e_1=u_m^{-1}e_m^TL_m^{-1}e_1,
\]
we have $\eta_{m+1}=u_{m+1}^{-1}e_{m+1}^TL_{m+1}^{-1}e_1$. Thus, by noting the equality
\[
\begin{bmatrix}L_m & \\ (\beta_{m+1}/u_m)e_m^T & 1\end{bmatrix}
\begin{bmatrix}L_m^{-1}e_1 \\ -(\beta_{m+1}/u_m)e_m^TL_m^{-1}e_1\end{bmatrix}=
\begin{bmatrix}e_1 \\ 0\end{bmatrix},
\]
we obtain
\begin{equation}\label{eqn:etam1}
\eta_{m+1}=-u_{m+1}^{-1}(\beta_{m+1}/u_m)e_m^TL_m^{-1}e_1
=-\beta_{m+1}\eta_m/u_{m+1}.
\end{equation}

Hence, to efficiently compute  the quantity $\eta_m$, it suffices to  insert a few
lines related to~\eqref{eqn:um1} and~\eqref{eqn:etam1}  into the
existing Lanczos  iteration~\eqref{eqn:lanczos}. Then,  with $\eta_m$,
the    incremental   error    is    computed in a    straightforward manner    by
using~\eqref{eqn:dmK2}. We  now put back  the index $k$  and summarize
this computation in Algorithm~\ref{algo:add.simple}. Note that because
at  the $m$-th  Lanczos step,  only  $\eta_m^k$ is  available but  not
$\eta_{m+1}^k$, we  need to shift  the index $m$  by $1$.

{\renewcommand{\baselinestretch}{1.2}
\begin{algorithm}[ht]
\caption{Computing $d_m^K$ for $m=1,2,\ldots$}
\label{algo:add.simple}
\begin{algorithmic}[1]
\For{$m=1,2,\ldots$}
\State Run one step of Lanczos, producing $\alpha_m$ and $\beta_{m+1}$; see~\eqref{eqn:lanczos}
\State Initialize $d_{m-1}^K\gets0$ if $m\ne1$
\label{algo.ln:add.simple.start}
\For{$k=1,\ldots,K$}
\State \textbf{if} $m=1$ \textbf{then} compute
$u_1^k=\alpha_1-z_k$ and
$\eta_1^k=(u_1^k)^{-1}$
\State \textbf{else} compute
$u_{m}^k=\alpha_{m}-z_k-\beta_{m}^2/u_{m-1}^k$,\,\,
$\eta_{m}^k=-\beta_{m}\eta_{m-1}^k/u_{m}^k$, and
\State \hspace{0.6cm} update $d_{m-1}^K\gets d_{m-1}^K-\Re\{c_k\beta_m\eta_m^k\eta_{m-1}^k\}$
\EndFor \label{algo.ln:add.simple.end}
\EndFor
\end{algorithmic}
\end{algorithm}}

The cost of computing $d_m^K$ in this manner for each $m$ is simply
$O(K)$. It is trivial compared with that of the Lanczos iteration, as
long as the number of quadrature points, $K$, is far smaller than the
matrix dimension $n$. This is because computing the $\alpha_m$'s and
$\beta_{m+1}$'s requires matrix-vector multiplications and vector
inner products, which have an $O(n+\nnz)$ cost, where $\nnz$ denotes
the number of nonzeros of the matrix. This approach is also more
economical than computing~\eqref{eqn:eta.m.k} directly through
factorization for every $m$, because the factorization/solve admits an
$O(mK)$ cost.

We note that the algorithm is equivalent to Theorem 3.9 of Golub and Meurant~\cite{Golub2009}, derived from a different angle.

\subsection{Estimating the bilinear form error}
We have presented an iterative algorithm for computing the incremental error $d_m^K$ in the preceding subsection. Due to the rational approximation, the cumulative error is now denoted by
\[
d_{m,m'}^K=\sum_{i=m}^{m'-1}d_m^K.
\]
When the accumulation is done to the end (i.e., $m'=n$), we reach the bilinear form error $\rho_m^K=d_{m,n}^K$.

It is,  of course, impractical  to accumulate incremental  errors till
$m'=n$, because  this requires  running the  Lanczos algorithm  to the
end. If  $e_1^Tf(T_m)e_1$ converges reasonably fast,  one expects that
$m'$ need  not be much larger than  $m$ for the cumulative error to be
nearly the bilinear form error. In fact, for the exponential function,
the extremely  fast convergence  indicates that the  incremental error
alone,  without accumulation,  is  already sufficiently  close to  the
bilinear form error  (see, e.g., Figure~\ref{fig:bilinear.form.err}(a)
in  Section~\ref{sec:exp.lap2d.bilinear.err}).  For  other  functions,
then,  one needs  a strategy  to find  a suitable  $m'$ such  that the
cumulative error is a good estimate of the bilinear form error.

This task is challenging because the incremental error is difficult to characterize. We therefore apply some simplified model that simulates the behavior of the sequence of incremental errors $d_1^K$, $d_2^K$, $d_3^K$ \ldots. The following known facts motivate a geometric progression model:

\begin{enumerate}
\item If $f^{(2m)}>0$ in the spectrum interval of $A$ for all $m$, then $e_1^Tf(T_m)e_1<e_1^Tf(T_{m+1})e_1$; and a similar statement holds when both inequalities change direction~\cite{Lagomasino2008,Golub2009}. Such a monotone convergence comes from the fact that the bilinear form error $e_1^Tf(T_n)e_1-e_1^Tf(T_m)e_1$ is $f^{(2m)}(x')$ times a positive factor, for some $x'$ inside the spectrum interval, based on a standard argument of Gauss quadratures. Many functions in applications possess this property, including $\exp(x)$, $\log(x)$, $\Gamma(x)$, $\tanh(\sqrt{x})$, and $x^{\alpha}$ for $\alpha<1$. A consequence is that under this condition, the incremental errors $d_m=e_1^Tf(T_{m+1})e_1-e_1^Tf(T_m)e_1$ have a constant sign.

\item The bilinear form error converges exponentially (i.e., $|\rho_m|=O(c^{-2m})$ for some $c>1$) if $f$ is analytic in the spectrum interval and analytically continuable in an open Bernstein ellipse whose foci are the two ends of the interval~\cite{Trefethen2012}. The exponential convergence is, again, owing to a standard property of Gauss quadratures. Hence, if $\rho_m$ is precisely $c^{-2m}$ up to a constant multiplicative factor, we have, for the incremental errors, $d_{m+1}/d_m=c^{-2}$ for all $m$.
\end{enumerate}

Based on these facts, we will use a geometric progression to
approximately model the behavior of the sequence of incremental
errors. The following result is a basis of the strategy we propose for
finding an appropriate $m'$ such that the cumulative error
$d_{m,m'}^K$ is a good approximation to the bilinear form error
$\rho_m^K$.

\begin{proposition}\label{prop:geom}
Let $\{a_i\}_{i=1}^{n-1}$ be a positive and decreasing geometric progression; that is, $a_{i+1}/a_i$ is a positive constant $<1$ for all $i$. The sequence may be infinite, in which case $n=\infty$. Given $m$, $m'>m$, and $t<1$ such that $a_{m'}/a_m\le t$, we have
\[
\frac{\sum_{i=m'}^{n-1}a_i}{\sum_{i=m}^{m'-1}a_i}\le
\begin{cases}
t/(1-t), & \text{if } n-m'>m'-m,\\
t, & \text{otherwise}.
\end{cases}
\]
\end{proposition}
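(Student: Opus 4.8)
The plan is to reduce everything to closed-form geometric sums. Write $q$ for the common ratio $a_{i+1}/a_i$, so that $0<q<1$ and $a_j=a_mq^{j-m}$ for every index $j$; in particular the hypothesis $a_{m'}/a_m\le t$ becomes $q^{m'-m}\le t$. The denominator $\sum_{i=m}^{m'-1}a_i$ is a geometric sum of $m'-m$ terms with first term $a_m$, hence equals $a_m(1-q^{m'-m})/(1-q)$, and the numerator $\sum_{i=m'}^{n-1}a_i$ is a geometric sum of $n-m'$ terms with first term $a_{m'}$, hence equals $a_{m'}(1-q^{n-m'})/(1-q)$, where $q^{n-m'}$ is read as $0$ when $n=\infty$ (the convergent series $a_{m'}/(1-q)$). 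Dividing, the factor $1-q$ cancels and $a_{m'}/a_m=q^{m'-m}$, so
\[
\frac{\sum_{i=m'}^{n-1}a_i}{\sum_{i=m}^{m'-1}a_i}
=q^{m'-m}\cdot\frac{1-q^{n-m'}}{1-q^{m'-m}}.
\]

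For the general bound I would simply use $1-q^{n-m'}\le1$ to get that the ratio is at most $q^{m'-m}/(1-q^{m'-m})$. Since $x\mapsto x/(1-x)$ is increasing on $[0,1)$ and $q^{m'-m}\le t<1$, this is at most $t/(1-t)$, which is the claimed bound in the first case (and, as the argument shows, it actually holds with no restriction on $n-m'$).

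For the sharper bound in the second case I would invoke the hypothesis $n-m'\le m'-m$: because $0<q<1$, a smaller exponent yields a larger power, so $q^{n-m'}\ge q^{m'-m}$, whence $1-q^{n-m'}\le1-q^{m'-m}$ and therefore $(1-q^{n-m'})/(1-q^{m'-m})\le1$ (the denominator being positive since $m'-m\ge1$). Plugging this into the displayed identity gives that the ratio is at most $q^{m'-m}\le t$.

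The computation is elementary, so there is no real obstacle; the only places demanding a little care are the bookkeeping of how many terms each partial sum contains, so that the closed forms come out correctly, and the two degenerate situations — $n=\infty$, handled by letting $q^{n-m'}\to0$, and $n=m'$, where the numerator is an empty sum equal to $0$ and both bounds hold trivially.
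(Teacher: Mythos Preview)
Your proof is correct and follows essentially the same route as the paper: both compute the closed-form ratio $q^{m'-m}(1-q^{n-m'})/(1-q^{m'-m})$ and then bound the factor $(1-q^{n-m'})/(1-q^{m'-m})$ by $1$ in general and by $1$ via $q^{n-m'}\ge q^{m'-m}$ when $n-m'\le m'-m$. The only cosmetic difference is that the paper first substitutes $q^{m'-m}\to t$ (using monotonicity in $q$) and then bounds $1-t^{(n-m')/(m'-m)}$, whereas you bound in $q$ first and invoke monotonicity of $x/(1-x)$ afterward.
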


\begin{proof}
Denote by $c=a_{i+1}/a_i$ the progression ratio. Clearly,
\[
\frac{\sum_{i=m'}^{n-1}a_i}{\sum_{i=m}^{m'-1}a_i}=\frac{c^{m'-m}-c^{n-m}}{1-c^{m'-m}}.
\]
Because the right-hand side of the above equality is an increasing function for $0<c<1$ and because $c^{m'-m}=a_{m'}/a_m\le t$, we obtain
\[
\frac{\sum_{i=m'}^{n-1}a_i}{\sum_{i=m}^{m'-1}a_i}\le\frac{t}{1-t}\left(1-t^{\frac{n-m'}{m'-m}}\right).
\]
We conclude the proof by noting that if $n-m'>m'-m$, then $1-t^{(n-m')/(m'-m)}\le1$; otherwise, $1-t^{(n-m')/(m'-m)}\le 1-t$.
\end{proof}

Proposition~\ref{prop:geom}  says  that  if $\{a_i\}$  is  a  positive
sequence with elements  decreasing at the same rate, and  if we pick a
pair  of indices  $m$ and  $m'$ such  that the  ratio $a_{m'}/a_m$  is
bounded by  some value  $t<1$, then the ratio  between the  summation from
$a_{m'}$  to  the  end  of  the  sequence,  and  that  from  $a_m$  to
$a_{m'-1}$, is  also bounded.  The bound, regardless  of how  long the
sequence is, can be  made small by using a small  $t$. For example, if
$t=0.1$, then  the bound,  either $t/(1-t)$  or $t$,  is approximately
$0.1$.

If the  sequence of incremental  errors $d_m^K$ follows  precisely the
geometric   progression  of   the  proposition,   then  applying   the
proposition we  see that  the ratio between  $\rho_m^K-d_{m,m'}^K$ and
$d_{m,m'}^K$ is bounded  by approximately $0.1$, when using $t=0.1$. In
other  words,  the  cumulative  error $d_{m,m'}^K$  is  close  to  the
bilinear form error $\rho_m^K$. This  closeness is sufficient for an error
estimation,   because  if   $\epsilon$   is  the   tolerance  and   if
$d_{m,m'}^K=\epsilon$, then the bilinear form error $\rho_m^K$ will be
at most approximately $1.1$ times of $\epsilon$.

In addition, if the incremental errors are negative but their absolute values follow a
geometric progression, we may clearly draw the same ``sufficient closeness'' 
conclusion by using an analogous argument.

Hence, to summarize, the strategy to estimate the bilinear form error $\rho_m^K$ at the $m$-th Lanczos step, is to find the smallest $m'>m$ such that $|d_{m'}^K|/|d_m^K|\le t$ and use $d_{m,m'}^K$ as an approximation of $\rho_m^K$. For all practical purposes, it suffices to fix the threshold $t$ to be $0.1$.

\subsection{Analysis}
The strategy proposed in the preceding subsection is motivated by a geometric progression model of the bilinear form errors. In practice, the errors rarely follow such a pattern exactly. In particular, although asymptotically the errors behave like a geometric progression due to the exponential convergence, they exhibit much variety before entering the asymptotic regime.

In this subsection, we analyze two example scenarios to gain a better understanding of the effectiveness of the error estimate. These scenarios are pictorially illustrated in Figure~\ref{fig:bilinear.error.pic}, where the left plot indicates that the logarithmic error decreases slowly initially, and the right plot suggests otherwise. In what follows, we give results analogous to Proposition~\ref{prop:geom}, one for each scenario.

\begin{figure}[ht]
\centering
\subfigure[Slow decrease initially]{
  \includegraphics[width=.4\linewidth]{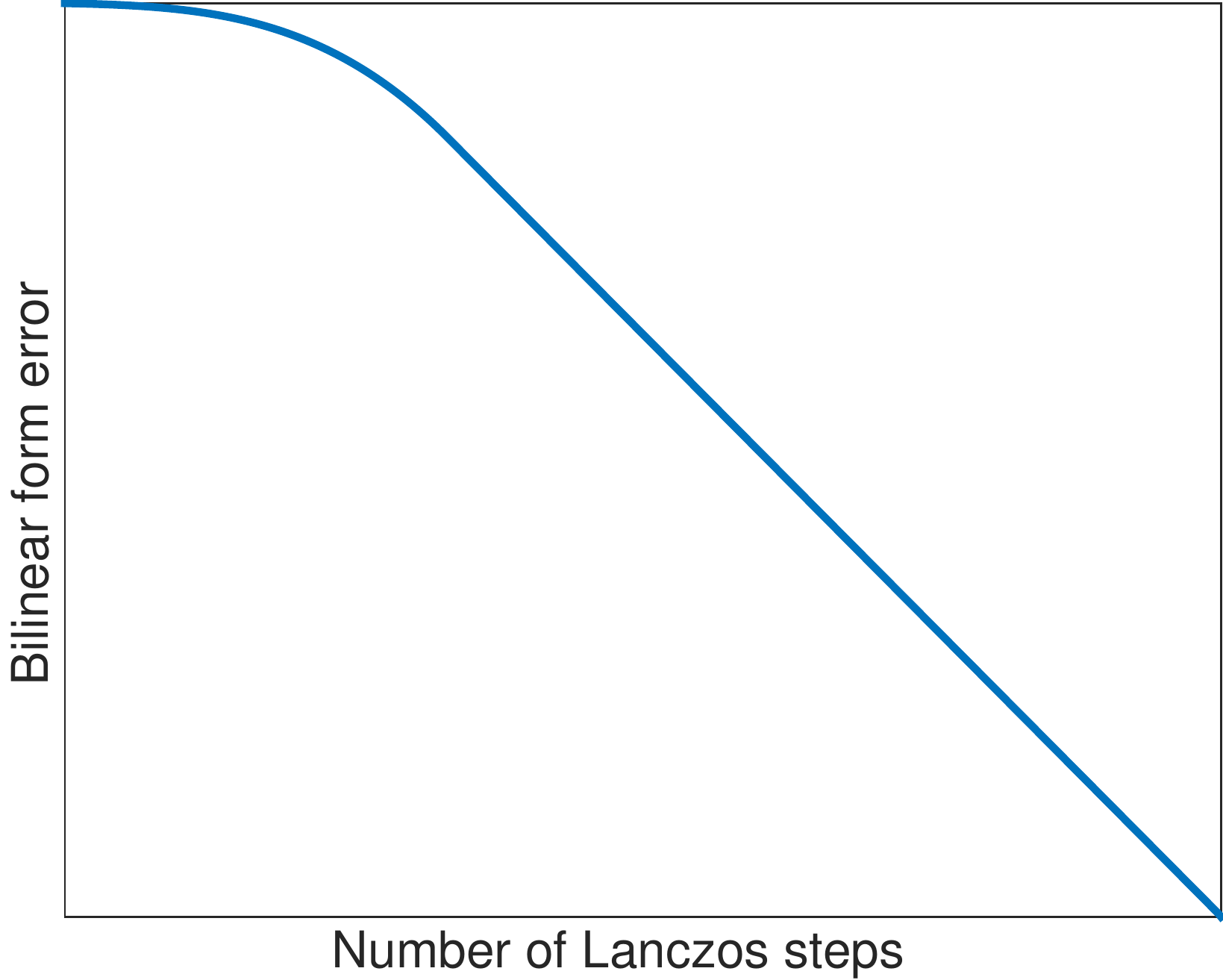}}\hspace{.5cm}
\subfigure[Fast decrease initially]{
  \includegraphics[width=.4\linewidth]{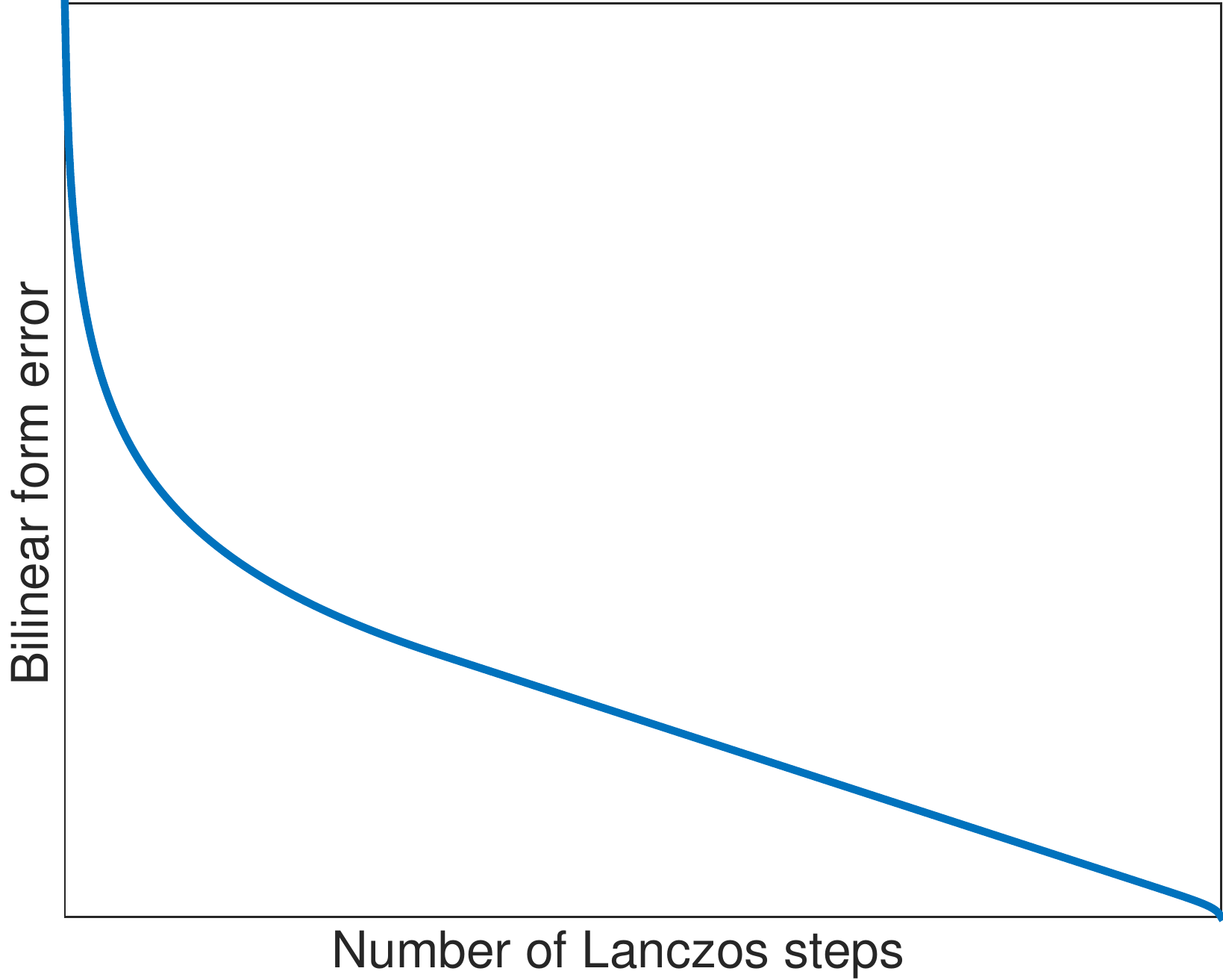}}
\caption{Pictorial illustration of the logarithmic bilinear form error over Lanczos iterations.}
\label{fig:bilinear.error.pic}
\end{figure}

An example of the scenario illustrated in Figure~\ref{fig:bilinear.error.pic}(a) is that the incremental errors $d_m^K$ admit a decreasing ratio $d_2^K/d_1^K \ge d_3^K/d_2^K \ge \cdots$. One may show simply through induction that in such a case, the bilinear form errors $\rho_m^K$ also admit a decreasing ratio $\rho_2^K/\rho_1^K \ge \rho_3^K/\rho_2^K \ge \cdots$, which gives a concave shape of the error curve. Then, we obtain the same bound as that in Proposition~\ref{prop:geom}, which is a special case of the following result.

\begin{proposition}\label{prop:geom2}
Let $\{a_i\}_{i=1}^{n-1}$ be a positive and nonincreasing sequence and let $\{a_{i+1}/a_i\}_{i=1}^{n-2}$ be nonincreasing as well. Given $m$, $m'>m$, and $t<1$ such that $a_{m'}/a_m\le t$, we have
\[
\frac{\sum_{i=m'}^{n-1}a_i}{\sum_{i=m}^{m'-1}a_i}\le
\begin{cases}
t/(1-t), & \text{if } n-m'>m'-m,\\
t, & \text{otherwise}.
\end{cases}
\]
\end{proposition}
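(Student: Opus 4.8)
The plan is to reduce Proposition~\ref{prop:geom2} to a comparison with a genuine geometric progression, so that Proposition~\ref{prop:geom} can be applied verbatim. The key observation is that a positive, nonincreasing sequence whose consecutive ratios are themselves nonincreasing is ``more concave'' (in the logarithmic sense) than the geometric progression that matches it on the block $[m,m']$. Concretely, I would introduce the single ratio $c:=(a_{m'}/a_m)^{1/(m'-m)}$ and the associated geometric progression $b_i:=a_m\,c^{\,i-m}$, defined for all relevant indices, so that $b_m=a_m$ and $b_{m'}=a_{m'}$, and $0<c<1$ since $a_{m'}/a_m\le t<1$ and the sequence is positive.

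First I would show that $a_i\le b_i$ for $m\le i\le m'-1$ and $a_i\ge b_i$ for $i\ge m'$ (restricting to $i\le n-1$). This is where the nonincreasing-ratio hypothesis does the real work: writing $a_{i+1}/a_i=:r_i$ with $r_1\ge r_2\ge\cdots$, we have $a_{m'}/a_m=\prod_{j=m}^{m'-1}r_j=c^{\,m'-m}$, so the geometric mean of $r_m,\dots,r_{m'-1}$ equals $c$; since these ratios are monotone, the early ones satisfy $r_j\ge c$ and the late ones $r_j\le c$, and a telescoping-product argument (comparing $\prod_{j=m}^{i-1}r_j$ against $c^{\,i-m}$) yields $a_i\ge b_i$ on $[m,m']$ — wait, I must be careful about the direction. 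Let me restate: because $r_m\ge r_{m+1}\ge\cdots$ and their geometric mean over $[m,m'-1]$ is $c$, for $m\le i\le m'$ the partial product $\prod_{j=m}^{i-1}r_j\ge c^{\,i-m}$, hence $a_i\ge b_i$ there; for $i\ge m'$, $a_i/a_{m'}=\prod_{j=m'}^{i-1}r_j$ with each $r_j\le r_{m'-1}\le c$, hence $a_i\le b_i$. Thus on the numerator range $a_i\le b_i$ and on the denominator range $a_i\ge b_i$, giving
\[
\frac{\sum_{i=m'}^{n-1}a_i}{\sum_{i=m}^{m'-1}a_i}\le\frac{\sum_{i=m'}^{n-1}b_i}{\sum_{i=m}^{m'-1}b_i},
\]
and the right-hand side is exactly the geometric-progression quantity bounded in Proposition~\ref{prop:geom} (applied with the same $m$, $m'$, $t$, and with $b_{m'}/b_m=c^{\,m'-m}=a_{m'}/a_m\le t$, and the same case split on whether $n-m'>m'-m$).

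The main obstacle is getting the two monotonicity comparisons ($a_i$ versus $b_i$ on the two ranges) cleanly and with the correct inequality directions; the telescoping-product argument is elementary but must be organized so that the nonincreasing-ratio hypothesis is invoked exactly where needed, and one should handle the edge cases $m'=m+1$ (numerator range empty of interior issues, denominator a single term where $a_m=b_m$) and $n=\infty$ (the sum $\sum_{i\ge m'}b_i$ converges since $c<1$, matching the $n-m'>m'-m$ branch). A slightly cleaner alternative, which I would mention, is to avoid the pointwise comparison entirely: one can run the same induction used to pass from ``decreasing $d_m^K$ ratios'' to ``decreasing $\rho_m^K$ ratios'' (alluded to just before the proposition) to show directly that $\big(\sum_{i=m'}^{n-1}a_i\big)\big/\big(\sum_{i=m}^{m'-1}a_i\big)$ is maximized, among sequences with the prescribed value of $a_{m'}/a_m$, precisely at the geometric case — but the pointwise-majorization route above is the most transparent and reuses Proposition~\ref{prop:geom} as a black box.
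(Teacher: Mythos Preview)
Your proposal is correct and takes a closely related but genuinely different route from the paper. The paper does not build a majorizing geometric sequence and then invoke Proposition~\ref{prop:geom} as a black box; instead it sets $c:=a_{m'}/a_{m'-1}$ (the \emph{last} ratio in the denominator block, hence the smallest one by the monotone-ratio hypothesis) and uses that choice directly to bound the numerator by $a_{m'}(1+c+\cdots+c^{n-m'-1})$ and the denominator from below by $a_m(1+c+\cdots+c^{m'-m-1})$, obtaining
\[
\frac{\sum_{i=m'}^{n-1}a_i}{\sum_{i=m}^{m'-1}a_i}\le\frac{a_{m'}}{a_m}\cdot\frac{1-c^{n-m'}}{1-c^{m'-m}}\le t\cdot\frac{1-c^{n-m'}}{1-c^{m'-m}},
\]
after which the case split follows from $c^{m'-m}\le a_{m'}/a_m\le t$ (each of the $m'-m$ ratios in the product is $\ge c$). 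Your approach instead takes $c:=(a_{m'}/a_m)^{1/(m'-m)}$, the geometric mean of the block ratios, proves the pointwise comparison $a_i\ge b_i$ on $[m,m']$ and $a_i\le b_i$ on $[m',n-1]$, and hands the resulting geometric sequence $\{b_i\}$ to Proposition~\ref{prop:geom}. Your route is more modular and the pointwise majorization has some independent appeal; the paper's route is a bit shorter because its choice of $c$ makes both the numerator and denominator bounds immediate one-line consequences of ``all later ratios are $\le c$, all earlier ratios are $\ge c$,'' so it sidesteps the partial-product lemma (your ``geometric mean of the first $k$ terms of a nonincreasing positive sequence is at least the overall geometric mean'' step) entirely.
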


\begin{proof}
Let $a_{m'}/a_{m'-1}=c$. Then, because $a_{i+1}/a_i$ is nonincreasing for all $i$, we have
\[
a_{m'}+a_{m'+1}+\cdots+a_{n-1} \le a_{m'}(1+c+c^2+\cdots+c^{n-m'-1})
\]
and
\[
a_m+a_{m+1}+a_{m'-1} \ge a_m(1+c+c^2+\cdots+c^{m'-m-1}).
\]
Therefore,
\[
\frac{\sum_{i=m'}^{n-1}a_i}{\sum_{i=m}^{m'-1}a_i} \le \frac{a_{m'}(1-c^{n-m'})}{a_m(1-c^{m'-m})} \le t\,\,\frac{1-c^{n-m'}}{1-c^{m'-m}}.
\]
Furthermore, from
\[
t\ge\frac{a_{m'}}{a_m}=\frac{a_{m+1}}{a_m}\frac{a_{m+2}}{a_{m+1}}\cdots\frac{a_{m'}}{a_{m'-1}}
\ge\left(\frac{a_{m'}}{a_{m'-1}}\right)^{m'-m}
=c^{m'-m},
\]
we conclude the proof by noting that if $n-m'>m'-m$, then $(1-c^{n-m'})/(1-c^{m'-m})\le1/(1-c^{m'-m})\le1/(1-t)$; otherwise, $(1-c^{n-m'})/(1-c^{m'-m})\le1$.
\end{proof}

On the other hand, an example of the scenario illustrated in Figure~\ref{fig:bilinear.error.pic}(b) is that the incremental errors $d_m^K$ admit an increasing ratio $d_2^K/d_1^K \le d_3^K/d_2^K \le \cdots$. The worst case is that there exist two consecutive integers $m$ and $m'=m+1$ such that the ratio $d_{m+1}^K/d_m^K$ is quite small (e.g., lower than the threshold $t$), but the incremental errors afterward decay too slowly, such that the cumulative error $d_{m,m'}^K$ constitutes only a tiny portion of the overall error $\rho_m^K$. Hence, we consider a case where the incremental errors cannot abruptly change. In particular, let us assume that the beginning of the sequence $\{d_m^K\}$ is proportional to $m^{-(p+1)}$ for some $p>0$. At a certain point (when $m=s$), the sequence decreases at a constant rate, which results in an exponential decay pattern of the bilinear form error. The rate $c$ is equal to $(s-1)^{p+1}/s^{p+1}$ such that the transition of the decaying patterns is smooth. For this scenario, we have the following result.

\begin{proposition}\label{prop:geom3}
Let $\{a_i\}_{i=m}^{n-1}$ be a sequence
\[
\frac{1}{m^{p+1}}, \,\, \frac{1}{(m+1)^{p+1}}, \,\, \ldots, \frac{1}{(s-1)^{p+1}},
\,\, \frac{1}{s^{p+1}}, \,\, \frac{c}{s^{p+1}}, \,\,\ldots,\frac{c^{n-s-1}}{s^{p+1}}
\]
for some integer $s\in(m,n)$ and real number $p>0$, where $c=(s-1)^{p+1}/s^{p+1}$. Given integer $m'\in(m,s)$ and $t<1$ such that $a_{m'}/a_m\le t$, we have
\[
\frac{\sum_{i=m'}^{n-1}a_i}{\sum_{i=m}^{m'-1}a_i}\le
\dfrac{1+\frac{p}{m'}-\left(\frac{m'}{s}\right)^p\left[1+\frac{p}{s}-\frac{p}{p+1}\frac{1}{1-p/(2s)}\right]}{t^{-\frac{p}{p+1}}-1}.
\]
\end{proposition}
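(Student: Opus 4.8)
The plan is to bound the numerator and denominator of the ratio
$\bigl(\sum_{i=m'}^{n-1}a_i\bigr)\big/\bigl(\sum_{i=m}^{m'-1}a_i\bigr)$ separately, using the explicit form of the sequence. First I would split the numerator sum at the transition index $s$: the part from $i=m'$ to $s-1$ is $\sum_{j=m'}^{s-1} j^{-(p+1)}$, and the tail from $i=s$ onward is $s^{-(p+1)}(1+c+c^2+\cdots+c^{n-s-1}) \le s^{-(p+1)}/(1-c)$. Since $c=(s-1)^{p+1}/s^{p+1}$, one computes $1-c = \bigl(s^{p+1}-(s-1)^{p+1}\bigr)/s^{p+1}$, and the key analytic estimate is a lower bound on $s^{p+1}-(s-1)^{p+1}$; by the mean value theorem this equals $(p+1)\xi^p$ for some $\xi\in(s-1,s)$, and a convenient clean bound is $s^{p+1}-(s-1)^{p+1}\ge (p+1)(s-1)^p$ or, after the massaging visible in the stated bound, something of the form $(p+1)s^p\bigl(1 - p/(2s)\bigr)$ coming from a second-order Taylor expansion of $(1-1/s)^{p+1}$. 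So the tail contributes at most $\dfrac{s^{-(p+1)}\cdot s^{p+1}}{(p+1)s^p(1-p/(2s))} = \dfrac{1}{(p+1)s^p(1-p/(2s))}$ after rewriting; this explains the term $\frac{p}{p+1}\frac{1}{1-p/(2s)}$ appearing (up to the factor $(m'/s)^p$) in the numerator of the claimed bound.

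Next I would handle the finite sums $\sum_{j=m'}^{s-1} j^{-(p+1)}$ and $\sum_{j=m}^{m'-1} j^{-(p+1)}$ by the standard integral comparison: for the decreasing function $x^{-(p+1)}$,
\[
\int_{a}^{b} x^{-(p+1)}\,dx \;\le\; \sum_{j=a}^{b-1} j^{-(p+1)} \;\le\; \int_{a-1}^{b-1} x^{-(p+1)}\,dx,
\]
and $\int x^{-(p+1)}\,dx = -\frac{1}{p}x^{-p}$. Applying the upper comparison to the numerator pieces and the lower comparison to the denominator, the $\frac1p$ factors cancel between numerator and denominator, and the boundary terms produce exactly the expressions $1+\frac{p}{m'}$ (from evaluating at $m'-1$, expanded as $(m'-1)^{-p} \approx m'^{-p}(1+p/m')$ to leading order, or handled exactly), the subtracted $\left(\frac{m'}{s}\right)^p\bigl[1+\frac{p}{s}-\cdots\bigr]$ (from the $s$ endpoint together with the tail estimate), and the denominator $t^{-p/(p+1)}-1$. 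The latter comes from the hypothesis $a_{m'}/a_m \le t$, i.e. $(m/m')^{p+1}\le t$, hence $m/m' \le t^{1/(p+1)}$, which after the integral lower bound $\sum_{j=m}^{m'-1} j^{-(p+1)} \ge \frac1p\bigl(m^{-p} - (m')^{-p}\bigr)$ and factoring out $(m')^{-p}/p$ yields the factor $\bigl((m'/m)^p - 1\bigr) \ge \bigl(t^{-p/(p+1)}-1\bigr)$.

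The main obstacle I anticipate is not any single estimate but the bookkeeping of getting the boundary terms to line up \emph{exactly} in the stated closed form: the integral comparison has a one-index shift ($a-1$ to $b-1$ versus $a$ to $b$) that must be applied on the correct side for each of the three sums, and the transition ratio $c$ must be substituted carefully so that the $s$-endpoint of the numerator's finite part combines cleanly with the geometric tail. I would verify at the end that all the $\frac1p$ normalizations cancel and that the claimed numerator is genuinely an upper bound (not just asymptotically), possibly absorbing lower-order slack into the $-p/(2s)$ term; the convexity/monotonicity of $x^{-(p+1)}$ is what guarantees every comparison goes the right direction, and the restriction $m'\in(m,s)$ is exactly what keeps all the relevant indices inside the power-law regime so the integral bounds apply uniformly.
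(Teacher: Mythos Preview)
Your plan is essentially the paper's proof: split the numerator at $s$, sum the geometric tail as $s^{-(p+1)}(1-c^{n-s})/(1-c)$, bound $1-c$ below via the second-order expansion $1-(1-1/s)^{p+1}\ge \frac{p+1}{s}\bigl(1-\frac{p}{2s}\bigr)$, bound the power-law sums by integrals, and extract the denominator factor $t^{-p/(p+1)}-1$ from $(m'/m)^{p}\ge t^{-p/(p+1)}$. All of that matches.

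The one place where your plan does not quite close is the upper integral comparison. Your shifted form $\sum_{j=a}^{b-1}j^{-(p+1)}\le\int_{a-1}^{b-1}x^{-(p+1)}\,dx$ produces the endpoint $(m'-1)^{-p}$, and after normalizing by $(m')^{-p}/p$ you get $(1-1/m')^{-p}$, which is \emph{strictly larger} than $1+p/m'$ (convexity gives $(1-x)^{-p}\ge 1+px$). So the ``leading-order'' replacement you propose goes the wrong way for an upper bound, and the stated numerator will not fall out. The paper sidesteps this by using the alternative (and equally elementary) upper bound
\[
\sum_{k=i}^{j-1}\frac{1}{k^{p+1}}\;\le\;\int_i^j\frac{dx}{x^{p+1}}\;+\;\Bigl(\frac{1}{i^{p+1}}-\frac{1}{j^{p+1}}\Bigr),
\]
i.e.\ the \emph{unshifted} integral plus the first-minus-last term. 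Applied with $i=m'$, $j=s$ and then divided through by $(m')^{-p}/p$, this yields exactly $1+\frac{p}{m'}-\bigl(\frac{m'}{s}\bigr)^p\bigl(1+\frac{p}{s}\bigr)$, and combining with the tail bound gives the stated numerator with no approximation. So your anticipated ``bookkeeping obstacle'' is real, and this is the device that resolves it.
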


\begin{proof}
Because $1/x^{p+1}$ is monotonically decreasing for $x>0$, from the definition of integral (area under curve), we have for any positive integers $i$ and $j$ where $j>i$,
\[
\frac{1}{i^{p+1}}+\cdots+\frac{1}{(j-1)^{p+1}}\ge\int_i^j\frac{dx}{x^{p+1}}
=\frac{1}{p}\left(\frac{1}{i^p}-\frac{1}{j^p}\right),
\]
and
\[
\frac{1}{i^{p+1}}+\cdots+\frac{1}{(j-1)^{p+1}}\le\int_i^j\frac{dx}{x^{p+1}}
+\left(\frac{1}{i^{p+1}}-\frac{1}{j^{p+1}}\right)
=\frac{1}{p}\left(\frac{1}{i^p}-\frac{1}{j^p}\right)
+\left(\frac{1}{i^{p+1}}-\frac{1}{j^{p+1}}\right).
\]
Therefore,
\begin{align*}
a_{m'}+\cdots+a_{n-1}
&=\frac{1}{(m')^{p+1}}+\cdots+\frac{1}{(s-1)^{p+1}}
+\frac{1}{s^{p+1}}+\cdots+\frac{c^{n-s-1}}{s^{p+1}}\\
&\le\frac{1}{p}\left(\frac{1}{(m')^p}-\frac{1}{s^p}\right)
+\left(\frac{1}{(m')^{p+1}}-\frac{1}{s^{p+1}}\right)
+\frac{1}{s^{p+1}}\frac{1-c^{n-s}}{1-c},
\end{align*}
and
\[
a_m+\cdots+a_{m'-1}\ge\frac{1}{p}\left(\frac{1}{m^p}-\frac{1}{(m')^p}\right).
\]
Hence, the ratio
\begin{equation}\label{eqn:r1}
\frac{\sum_{i=m'}^{n-1}a_i}{\sum_{i=m}^{m'-1}a_i}\le
\dfrac{1-\left(\frac{m'}{s}\right)^p+\frac{p}{m'}-\left(\frac{m'}{s}\right)^p\frac{p}{s}+\left(\frac{m'}{s}\right)^p\frac{p}{s}\frac{1-c^{n-s}}{1-c}}{\left(\frac{m'}{m}\right)^p-1}.
\end{equation}
Moreover, from $a_{m'}/a_m\le t$ we have
\begin{equation}\label{eqn:r2}
\left(\frac{m'}{m}\right)^p
=\left[\left(\frac{a_m}{a_{m'}}\right)^{-\frac{1}{p+1}}\right]^p
\ge t^{-\frac{p}{p+1}},
\end{equation}
and from $c=(s-1)^{p+1}/s^{p+1}$ we have
\begin{equation}\label{eqn:r3}
1-c=1-\left(1-\frac{1}{s}\right)^{p+1}
\ge1-\left[1-\frac{p+1}{s}+\frac{(p+1)p}{2s^2}\right]
=\frac{p+1}{s}\left(1-\frac{p}{2s}\right).
\end{equation}
Then, substituting~\eqref{eqn:r2} and~\eqref{eqn:r3}, together with the fact that $1-c^{n-s}<1$, into~\eqref{eqn:r1}, we reach the inequality result of the proposition.
\end{proof}

The bound in Proposition~\ref{prop:geom3} is slightly more obscure than that of Proposition~\ref{prop:geom2}, but it offers a qualitative interpretation. When $p\ll m'$ and $s$, the terms $p/m'$ and $p/s$ in the numerator are nearly zero, and hence the bound reads
\begin{equation}\label{eqn:bound}
\frac{\sum_{i=m'}^{n-1}a_i}{\sum_{i=m}^{m'-1}a_i}\lessapprox
\dfrac{1-\frac{1}{p+1}\left(\frac{m'}{s}\right)^p}{t^{-\frac{p}{p+1}}-1}.
\end{equation}
When $p\ge1$, we could even enlarge the right-hand side by omitting the term $\frac{1}{p+1}\left(\frac{m'}{s}\right)^p$, which results in a bound $\left(t^{-\frac{p}{p+1}}-1\right)^{-1}$. If $t=0.1$, this bound $\le0.47$, sufficient for error estimation. When $p<1$, the term $\frac{1}{p+1}\left(\frac{m'}{s}\right)^p$ may be nonnegligible, especially when $m'$ is not too far from $s$. This term offsets the possibly small value of $t^{-\frac{p}{p+1}}-1$. The net result is that the bound~\eqref{eqn:bound} is not too large. For example, if $m'/s=0.5$, then the bound $\le0.74$, again sufficient for error estimation.

\section{Overall algorithm and parameter setting}
With the developments in the  preceding sections, we now summarize the
overall  algorithm that  includes both  approximating $\tr(f(A))$  and
estimating   the   approximation   error.   Details   are   shown   in
Algorithm~\ref{algo:hutchinson1}.

\subsection{Algorithm}
The  procedure  begins with  approximating  $\tr(f(A))$  by using  $N$
independent  and unbiased  samples $u_i^Tf(A)u_i$.  Each sample  is in
turn approximated  by $\|u_i\|^2e_1^Tf(T_m)e_1$  based on  the Lanczos
method. Here, $m$ is the number  of Lanczos steps and it is implicitly
determined  by  an error  tolerance  $\delta$  that ensures  that  the
difference  between  $u_i^Tf(A)u_i$ and  $\|u_i\|^2e_1^Tf(T_m)e_1$  is
bounded by the tolerance. To estimate the difference between these two
quantities (termed ``bilinear form error'')  at each Lanczos step $m$,
an incremental error $d_m^K$ is  computed based on a simple recurrence
summarized  in Algorithm~\ref{algo:add.simple}.  Then, an  estimate of
the  bilinear form  error,  termed ``cumulative  error,'' is  computed
based on the incremental errors.

To  be specific,  we need  to trace  back a  few steps  (say, at  step
$\ud{m}<m$) to obtain  an accurate approximation of  the bilinear form
error. Therefore,  the algorithmic  progression is  opposite to  how the
cumulative error  is defined  in the preceding  section (where  we used the
notation  $m<m'$ and thought forwardly).  Algorithmically, we  say that
$d_{\ud{m},m}^K$  is an  accumulation of  the incremental  errors from
$d_{\ud{m}}^K$ to $d_{m-1}^K$. Hence, whenever a new incremental error
is obtained in  a certain Lanczos step, it is  added to the cumulative
errors      for       all      the      previous       steps      (see
line~\ref{algo.ln:hutchinson1.cum}                                  of
Algorithm~\ref{algo:hutchinson1}).    We    maintain    a    threshold
$t=0.1$.  For every  $m$, there is an associated  $\ud{m}$  that is  the
smallest integer satisfying $|d_m^K|/|d_{\ud{m}}^K|\ge  t$. If at some
step  $m$   with  the   associated  $\ud{m}$,  the   cumulative  error
$d_{\ud{m},m}^K$ falls below  the scaled tolerance $\delta/\|u_i\|^2$,
then   we    consider   that    the   bilinear    form   approximation
$\|u_i\|^2e_1^Tf(T_{\ud{m}})e_1$   has  converged   to  $u_i^Tf(A)u_i$
within  a  tolerance  $\delta$.  This  concludes  the  computation  of
$u_i^Tf(A)u_i$.


\begin{algorithm}[ht]
\caption{Estimating $\tr(f(A))$ with confidence interval}
\label{algo:hutchinson1}
\begin{algorithmic}[1]
\Require Matrix $A$, function $f$, number $N$ of random vectors, Lanczos tolerance $\delta$, threshold $t=0.1$
\State If needed by line~\ref{algo.ln:hutchinson1.est.spectrum}, estimate the spectrum interval of $A$
\State Obtain a rational approximation of $f$ (in the spectrum interval)
\label{algo.ln:hutchinson1.est.spectrum}
\Loop{ $i=1,2,\ldots,N$}
\State Generate a random vector $u_i$ and use $u_i/\|u_i\|$ as the starting vector of Lanczos
\Loop{ $m=1,2,\ldots$}
\State Run one step of Lanczos; perform reorthogonalization if necessary
\State Compute incremental error $d_m^K$ (i.e., lines~\ref{algo.ln:add.simple.start}--\ref{algo.ln:add.simple.end} of Algorithm~\ref{algo:add.simple})
\State Update cumulative error $d_{\ud{m},m}^K\gets d_{\ud{m},m}^K+d_m^K$ for all $\ud{m}<m$ \label{algo.ln:hutchinson1.cum}
\State Find the smallest integer $\ud{m}<m$ such that $t|d_{\ud{m}}^K|\le|d_m^K|$
\State If $|d_{\ud{m},m}^K|<\delta/\|u_i\|^2$, signal convergence and exit the current loop
\EndLoop
\State Obtain an approximation $\|u_i\|^2e_1^Tf(T_{\ud{m}})e_1$ of the unbiased sample $u_i^Tf(A)u_i$
\EndLoop
\State Obtain an estimate of $\tr(f(A))$ with $N$ samples
\State Obtain a confidence interval for the estimate by using Theorem~\ref{thm:bound}
\end{algorithmic}
\end{algorithm}

\subsection{Parameters}\label{sec:param}
Two parameters related to the error estimation deserve some attention.

In principle, the Lanczos tolerance $\delta$ is free, because Theorem~\ref{thm:bound} is applicable to any positive $\delta$. In practice, it is not sensible to make $\delta$ too small or too large. In the former case the uncertainty in the statistical error dominates, whereas in the latter case the numerical bias dominates. A reasonable approach is to make these two sources of errors comparable; i.e., let $\delta=\beta\alpha s^{(m)}/\sqrt{N}$ for some $\beta\approx1$ (see Corollary~\ref{cor:bound}). The conundrum of this approach is that the sample standard error $s^{(m)}$ is unknown. To resolve the issue, one could run Algorithm~\ref{algo:hutchinson1} once as a precomputation, but omit all the unnecessary overheads. That is, no error estimation is performed, and the sample size $N'$ needs only be sufficient for the sample standard error to stabilize (e.g., $N'=30$), but it needs not be as large as $N$.

The number of poles, $K$, must be sufficiently large such that the error in the rational approximation of $f$ does not compromise the estimation of the bilinear form error. Based on Theorem~\ref{thm:rational.err}, we will require that the uniform error of $f-r_K$ be at most $\frac{1}{2}\delta/\|u_i\|^2$. When the random vectors $u_i$ are symmetric Bernoulli vectors, each $u_i$ has a constant 2-norm $\sqrt{n}$.

\section{Experiments with 2D Laplacian}
To test the effectiveness of the  proposed method, we first verify the
several  algorithmic components  with the  2D Laplacian  matrix on  an
$n_1\times n_2$ grid:
\[
A=I_{n_2\times n_2}\otimes L_{n_1\times n_1} + L_{n_2\times n_2}\otimes I_{n_1\times n_1},
\]
where  $I$ is  the identity  matrix, $L$  is the  1D Laplacian  matrix
$\text{tridiag}(-1,2,-1)$,  and  the   subscripts  denote  the  matrix
size. This matrix is sparse and  is well suited for the Lanczos method
that  heavily relies on matrix-vector  multiplications.   Moreover,  its
eigenvalues  and  eigenvectors  are   known.  In  particular,  $A$  is
increasingly    ill    conditioned    (with   a    condition    number
$\approx4n/\pi^2$  for a  square  grid $n_1=n_2$)  and  the matrix  of
normalized  eigenvectors coincides  with the  matrix of  discrete sine
transform.  Hence,  the  ground  truth  $\tr(f(A))$  can  be  computed
economically, with an $O(n\log n)$ cost, through fast sine transform.

The experiments in this section consist of three parts: (a) the effectiveness of rational approximations for several commonly used functions $f$; (b) the effectiveness of error estimation for the bilinear form $v_1^Tf(A)v_1$; and (c) the effectiveness of the overall error estimation for $\tr(f(A))$ in the form of a confidence interval. Although rational approximations are not the contribution of this work, the purpose of part (a) is to obtain an empirical understanding of the needed number of poles, $K$.

\subsection{Rational approximation}\label{sec:exp.lap2d.rat.approx}
We consider four functions with known fast-converging rational approximations: the negative exponential $\exp(-x)$, the square root $\sqrt{x}$, the logarithm $\log(x)$, and a composite of hyperbolic tangent and square root $\tanh(\sqrt{x})$, all used for $x>0$. These approximations are related to quadratures of contour integrals, as we briefly motivated in Section~\ref{sec:rat.approx}. The details for the exponential appear in Trefethen et al.~\cite{Trefethen2006}, who discussed approximations derived from both Talbot quadratures and best uniform approximations. The details for the latter three functions appear in Hale et al.~\cite{Hale2008}, who proposed using the trapezoid rule on conformal mappings of the circular contour. The resulting approximations in Hale et al.~\cite{Hale2008} are dependent on the spectrum interval of $A$.

Minor modifications are needed for our use. For the exponential, discussions in Trefethen et al.~\cite{Trefethen2006} are based on $\exp(x)$, $x\le0$; hence, we need to flip the sign of $x$ and accordingly negate the coefficients and poles, such that they agree with the canonical form~\eqref{eqn:rK}. Moreover, because both the coefficients and the poles come in conjugate pairs, we may keep only one from each pair, multiply the coefficients by $2$, and extract the real part of the sum. This results in the form~\eqref{eqn:rK2}, reducing the number of summation terms in~\eqref{eqn:rK} by half. We will use the best uniform approximation rather than Talbot quadratures because it converges twice as fast. For Matlab codes, see Figure 4.1 of Trefethen et al.~\cite{Trefethen2006}.

For the logarithm $\log(x)$ and the composite $\tanh(\sqrt{x})$, we
will use Method 2 and Method 1 of Hale et al.~\cite{Hale2008},
respectively. The formulas therein are in the form of
neither~\eqref{eqn:rK} nor~\eqref{eqn:rK2}: an additional
multiplicative term $A$ appears in the front and the imaginary part of
a summation is extracted instead of the real part. Hence, we turn to
the quadrature formula before the imaginary part is extracted, rewrite
the formula into the canonical form~\eqref{eqn:rK} plus a constant,
and extract the real part of the summation as done for the exponential
discussed above (which results in the same effect of reducing
summation terms by half).  The additional constant term attached to
the canonical form~\eqref{eqn:rK} cancels out when the quadrature is
used for approximating the bilinear form error (cf.~\eqref{eqn:dmK});
hence, it barely matters.

For the square root $\sqrt{x}$, we will use Method 3 
of Hale et al.~\cite{Hale2008}. No modifications are needed. Note that the poles are all on the negative real axis.

We plot  in Figure~\ref{fig:rational.appx} the error  $|f-r_K|$, where
recall that  $r_K$ is the  rational approximation with $K$  terms. For
the exponential, the interval is  $[0,8]$ and for the other functions,
the interval  is $[10^{-6},1]$. As can be seen, for the  exponential, a
very small number of points suffices  to decrease the uniform error to
approximately machine precision. For the other functions, $K$ needs to be
larger, but  often one  or a  few dozen points are sufficient.

\begin{figure}[ht]
\centering
\subfigure[$f(x)=\exp(-x)$]{
  \includegraphics[width=.48\linewidth]{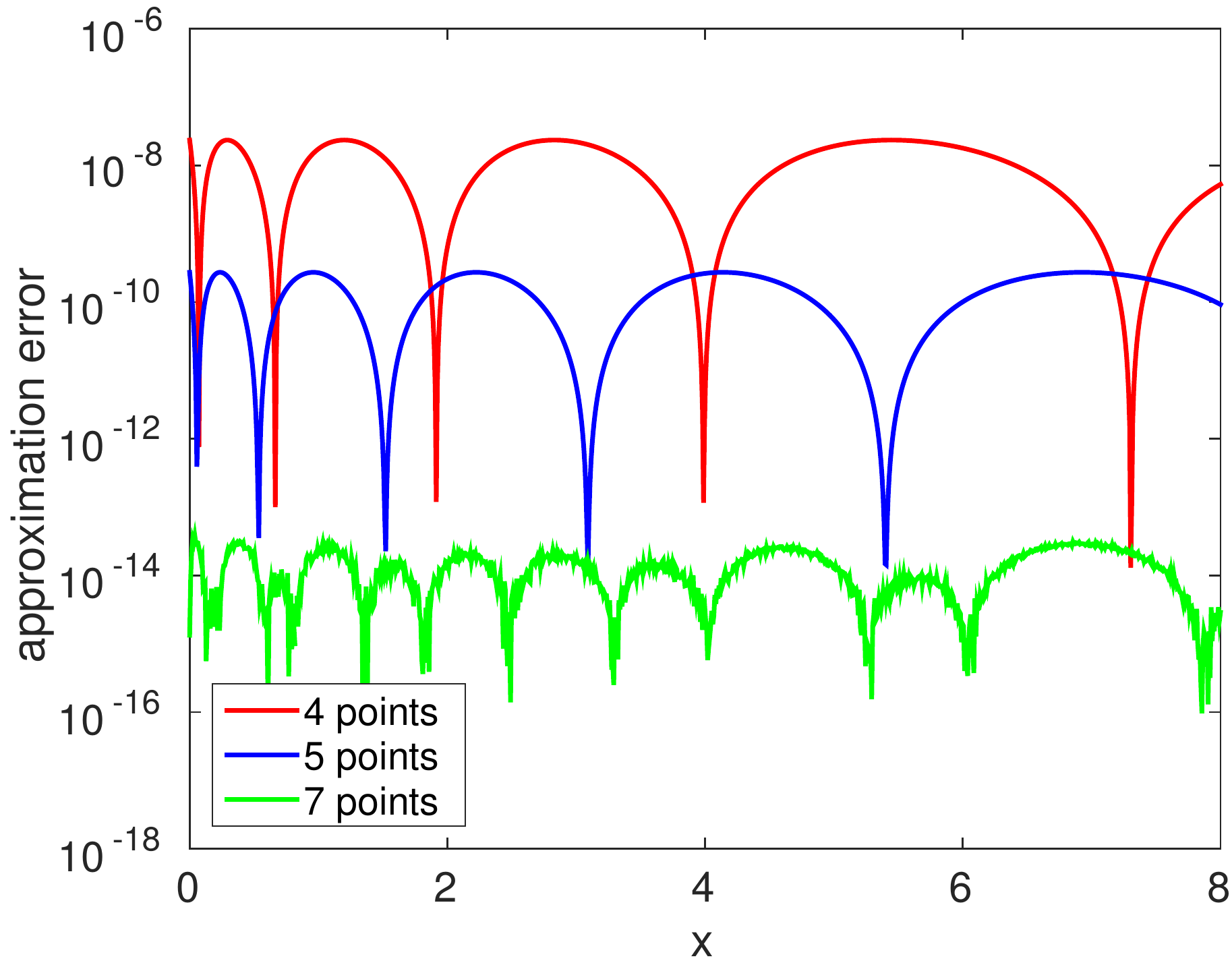}}
\subfigure[$f(x)=\sqrt{x}$]{
  \includegraphics[width=.48\linewidth]{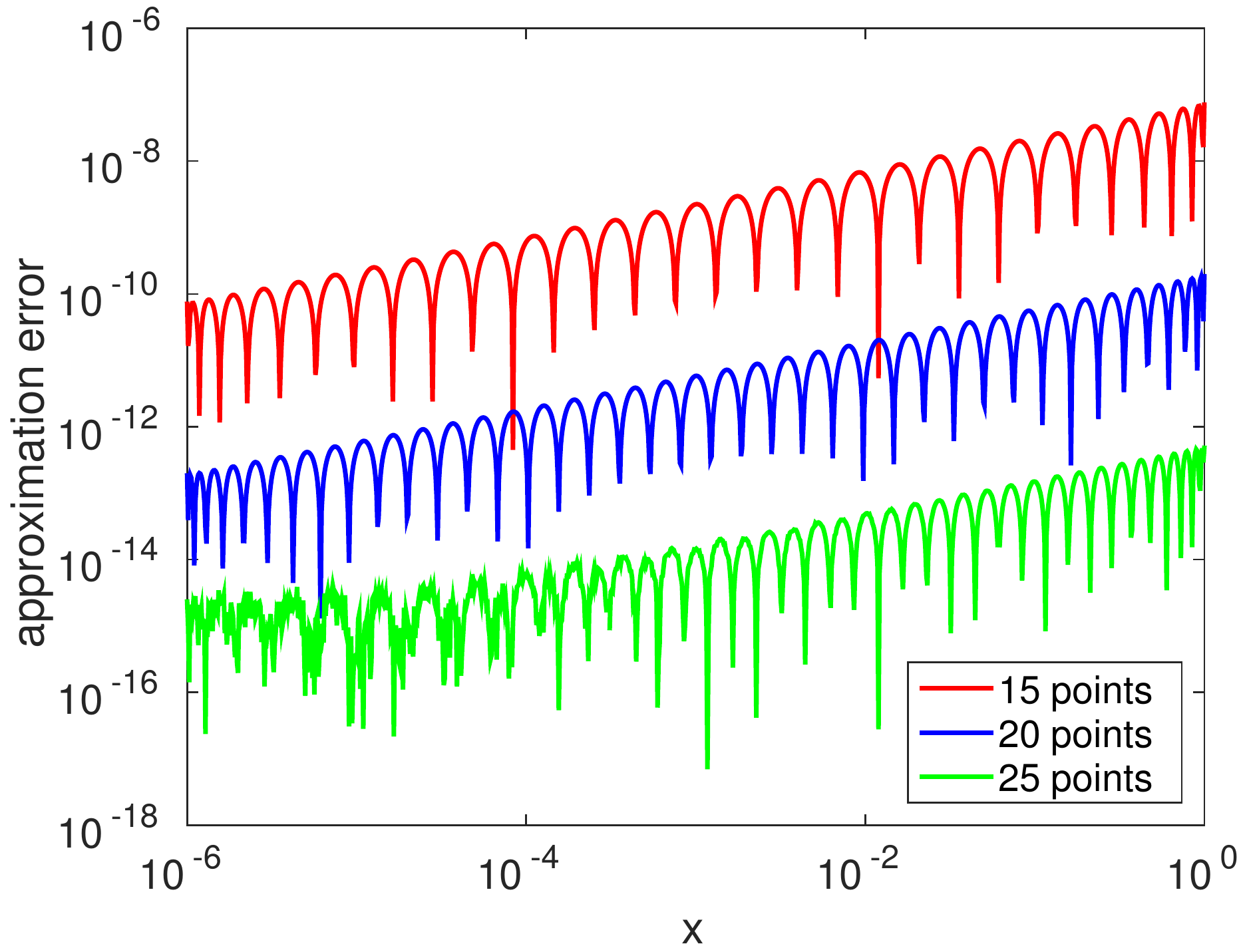}}\\
\subfigure[$f(x)=\log(x)$]{
  \includegraphics[width=.48\linewidth]{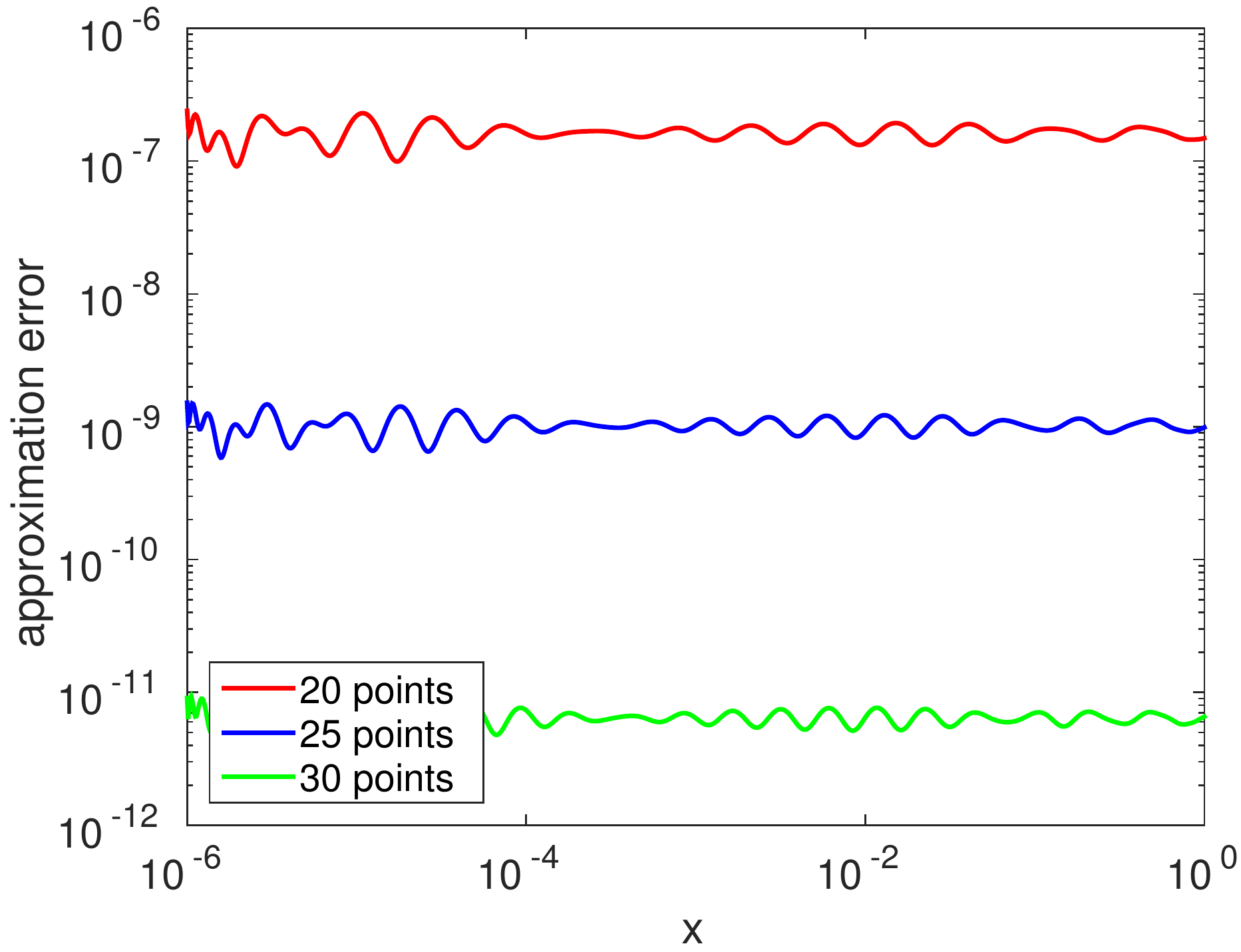}}
\subfigure[$f(x)=\tanh(\sqrt{x})$]{
  \includegraphics[width=.48\linewidth]{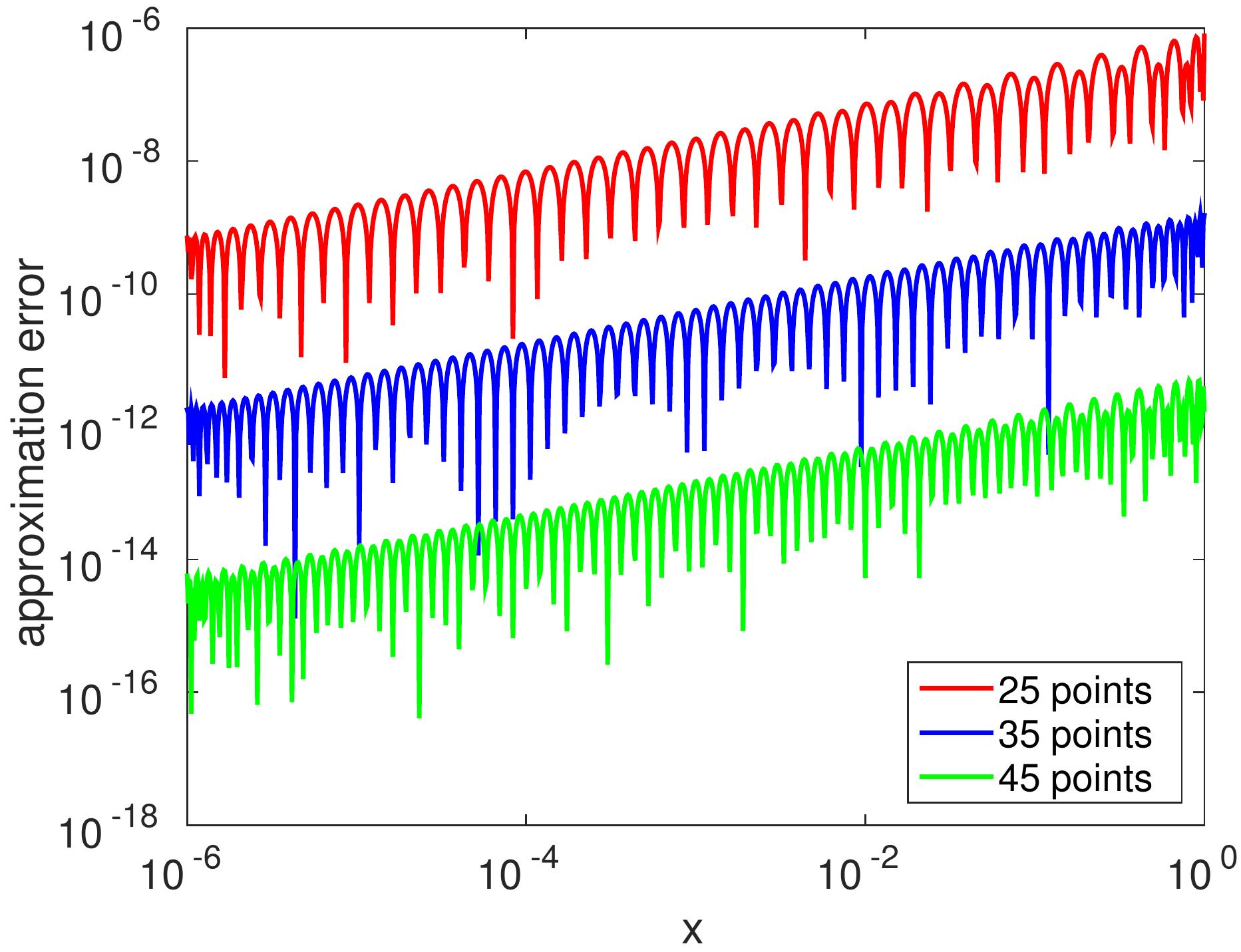}}
\caption{Rational approximation of various functions $f$ by using $K$ quadrature points.}
\label{fig:rational.appx}
\end{figure}

\subsection{Error estimation for the bilinear form}\label{sec:exp.lap2d.bilinear.err}
We  use  a  $300\times400$  grid  as  an  example.  The  ground  truth
$v_1^Tf(A)v_1$ for any vector $v_1$ may be computed by using fast sine
transform,  as explained  earlier. Here,  we  choose $v_1$  to be  the
random vector of iid (independent and identically distributed) symmetric  Bernoulli variables, normalized to the
unit norm. The Lanczos approximation $e_1^Tf(T_m)e_1$ with $m$ Lanczos
steps is  then computed  and the  error is plotted  as the  blue solid
curve in Figure~\ref{fig:bilinear.form.err}.

\begin{figure}[ht]
\centering
\subfigure[$f(x)=\exp(-x)$]{
  \includegraphics[width=.48\linewidth]{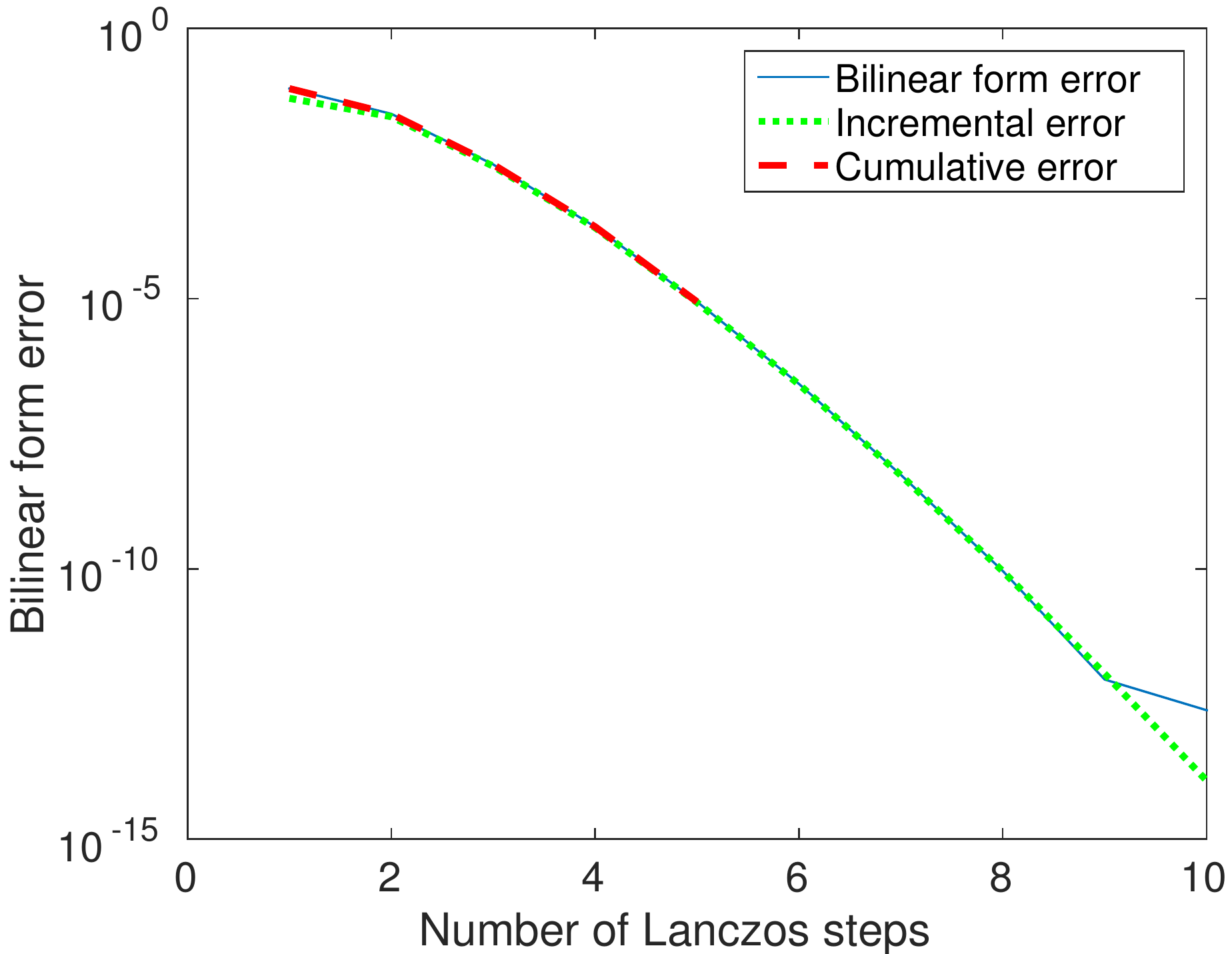}}
\subfigure[$f(x)=\sqrt{x}$]{
  \includegraphics[width=.48\linewidth]{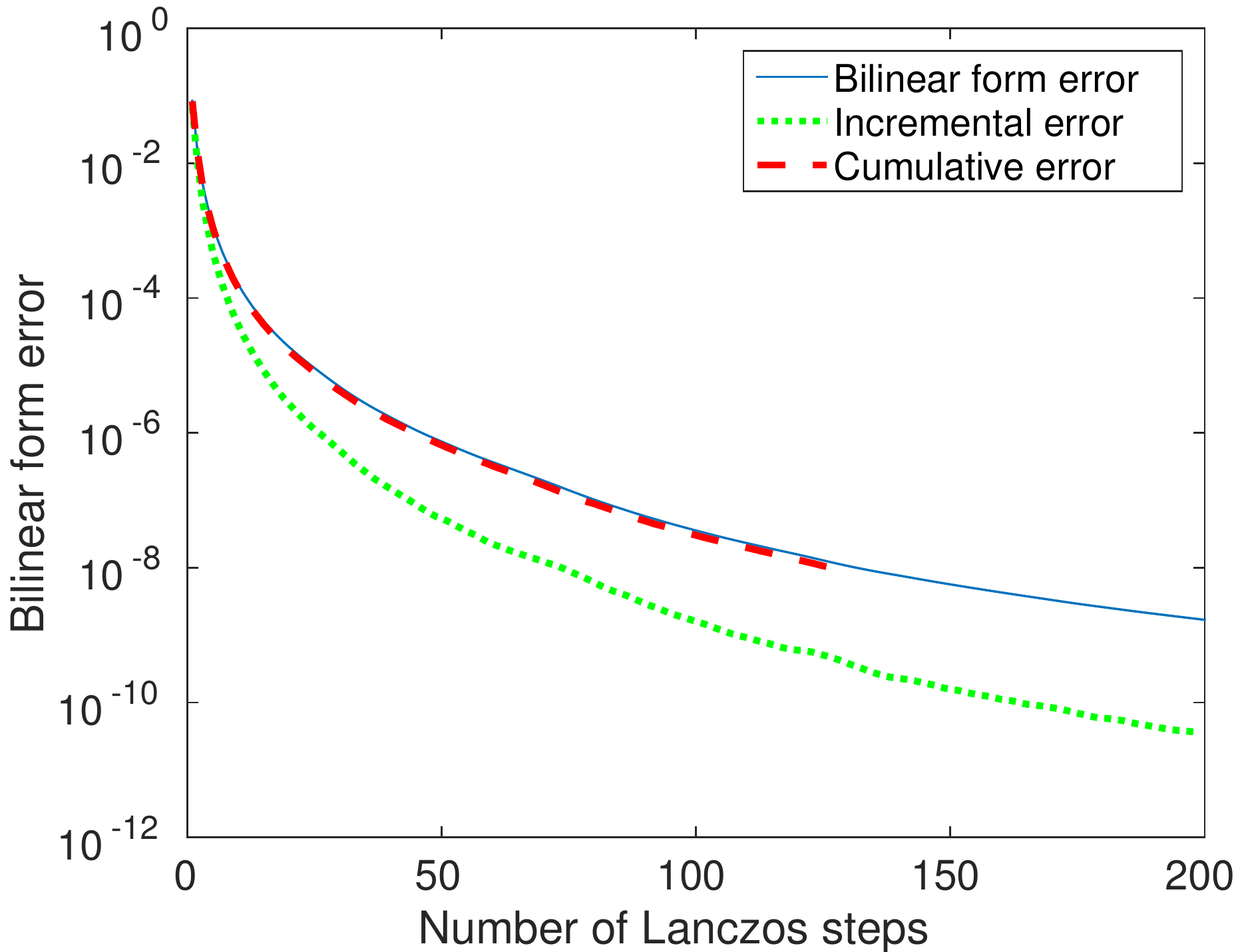}}\\
\subfigure[$f(x)=\log(x)$]{
  \includegraphics[width=.48\linewidth]{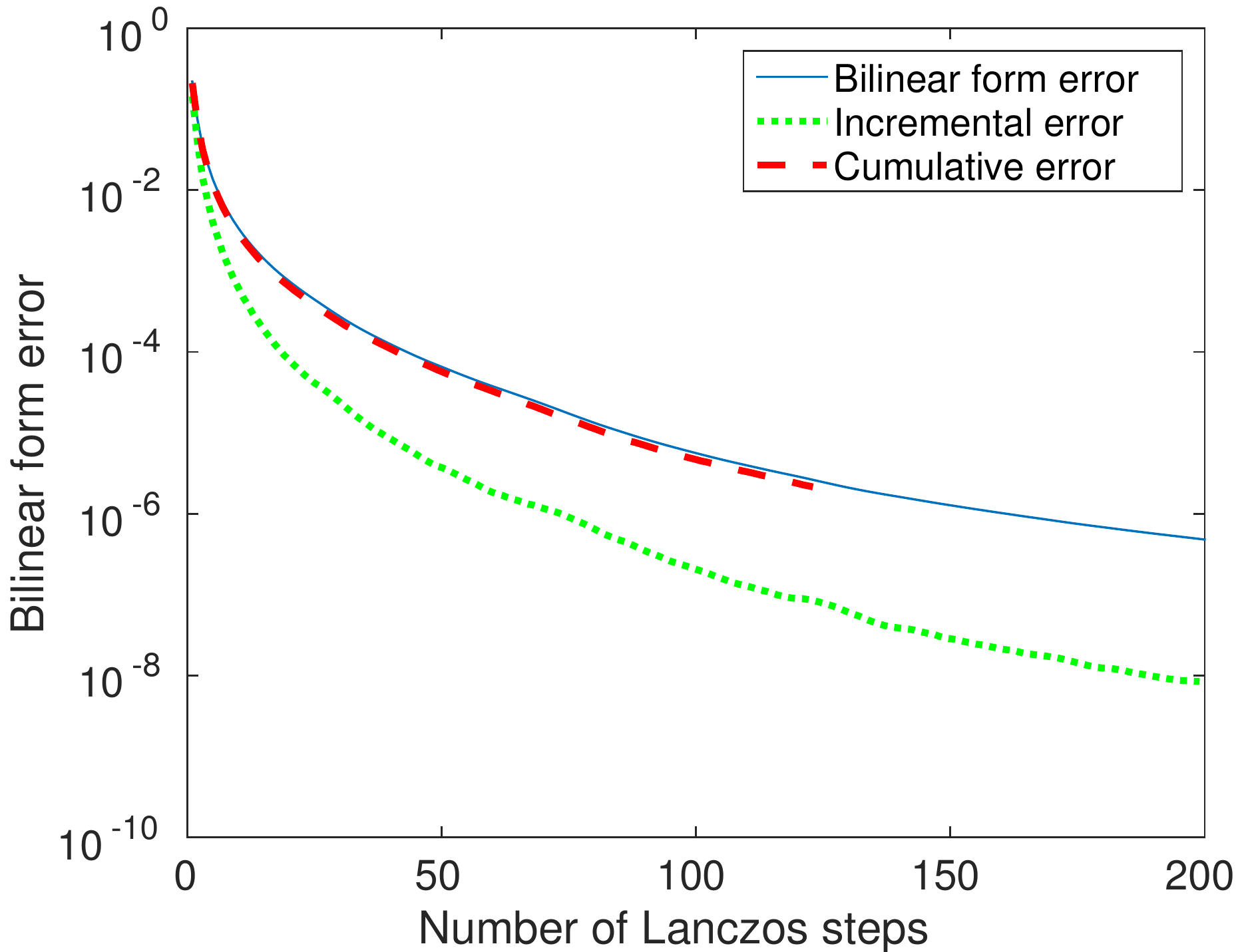}}
\subfigure[$f(x)=\tanh(\sqrt{x})$]{
  \includegraphics[width=.48\linewidth]{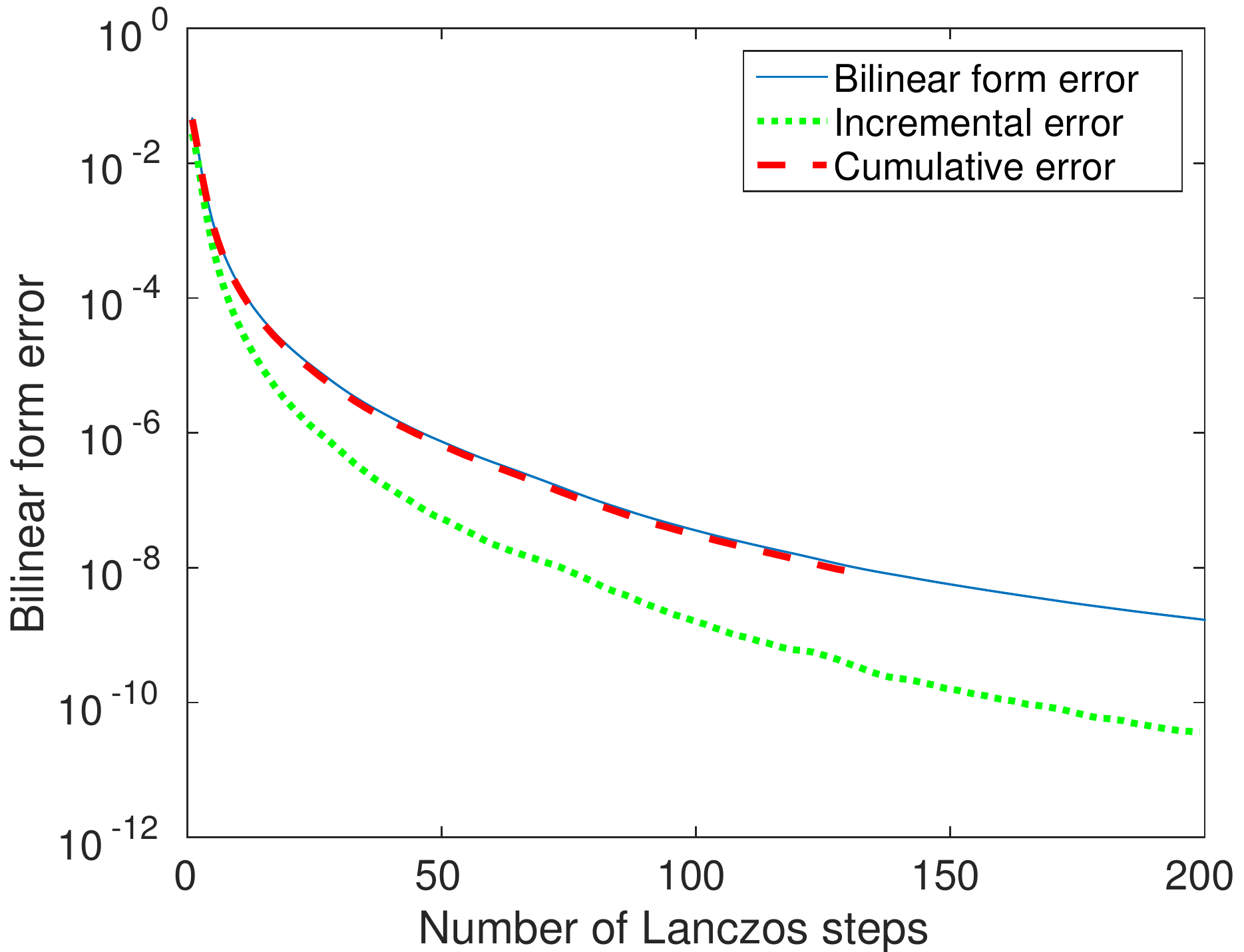}}
\caption{Bilinear form error and estimates, for 2D Laplacian matrix on a $300\times400$ grid.}
\label{fig:bilinear.form.err}
\end{figure}

To estimate this error, we compute the incremental error $d_m^K$ and the cumulative error $d_{m,m'}^K$, where $m'$ is the smallest integer greater than $m$ such that $|d_{m'}^K|/|d_m^K|\le t$.
The absolute value of these errors is plotted as the green and red dashed curves in the figure, respectively. Clearly, we may plot only the $d_{m,m'}^K$'s that satisfy $m'\le200$, which is the maximum number of Lanczos steps seen in Figure~\ref{fig:bilinear.form.err}.

As  can  be  seen,  for  the  exponential,  the  three  errors  nearly
overlap. It is for  this reason that we do not  plot the whole red
curve; otherwise,  it fully covers  the green curve. The  extremely fast
convergence implies that the incremental error suffices as an estimate
of the bilinear  form error. For the other  functions, the incremental
error is far  from the bilinear form error, and  hence it is necessary
to do an accumulation to get a better estimate. The fact  that the curve of
cumulative errors nearly overlaps with that of the bilinear form errors
indicates              that              the              accumulation
criterion is effective.

\if0
\begin{table}[ht]
\caption{Cumulative error and bilinear form error when Lanczos terminates at tolerance $\epsilon$. The matrix is 2D Laplacian on a $300\times400$ grid (same setting as that of Figure~\ref{fig:bilinear.form.err}).}
\label{tab:bilinear.form.err}
\centering

\begin{tabu}{ccccccc}
\multicolumn{3}{l}{$f(x)=\exp(-x)$}\\
\cline{1-3}\cline{5-7}
\multicolumn{3}{c}{Error tolerance {\tt$\epsilon$ = 1.00e-05}} &&
\multicolumn{3}{c}{Error tolerance {\tt$\epsilon$ = 1.00e-06}}\\
\cline{1-3}\cline{5-7}
\# Lan. & Cum.  & Bilin. && \# Lan. & Cum.  & Bilin.\\
steps   & error & error  && steps   & error & error\\
\cline{1-3}\cline{5-7}
\rowfont{\tt}
$m=5$  & 8.73e-06 & 8.73e-06 && $m=6$  & 2.64e-07 & 2.65e-07\\
\rowfont{\tt}
$m'=6$ &          & 2.65e-07 && $m'=7$ &          & 5.48e-09\\
\cline{1-3}\cline{5-7}
\end{tabu}

\vskip10pt
\begin{tabu}{ccccccc}
\multicolumn{3}{l}{$f(x)=\sqrt{x}$}\\
\cline{1-3}\cline{5-7}
\multicolumn{3}{c}{Error tolerance {\tt$\epsilon$ = 1.00e-04}} &&
\multicolumn{3}{c}{Error tolerance {\tt$\epsilon$ = 1.00e-05}}\\
\cline{1-3}\cline{5-7}
\# Lan. & Cum.  & Bilin. && \# Lan. & Cum.  & Bilin.\\
steps   & error & error  && steps   & error & error\\
\cline{1-3}\cline{5-7}
\rowfont{\tt}
$m=12$  & 8.02e-05 & 9.23e-05 && $m=24$  & 9.18e-06 & 1.05e-05\\
\rowfont{\tt}
$m'=22$ &          & 1.39e-05 && $m'=42$ &          & 1.42e-06\\
\cline{1-3}\cline{5-7}
\end{tabu}

\vskip10pt
\begin{tabu}{ccccccc}
\multicolumn{3}{l}{$f(x)=\log(x)$}\\
\cline{1-3}\cline{5-7}
\multicolumn{3}{c}{Error tolerance {\tt$\epsilon$ = 1.00e-04}} &&
\multicolumn{3}{c}{Error tolerance {\tt$\epsilon$ = 1.00e-05}}\\
\cline{1-3}\cline{5-7}
\# Lan. & Cum.  & Bilin. && \# Lan. & Cum.  & Bilin.\\
steps   & error & error  && steps   & error & error\\
\cline{1-3}\cline{5-7}
\rowfont{\tt}
$m=42$  & 9.48e-05 & 1.08e-04 &&   $m=83$ & 9.63e-06 & 1.15e-05\\
\rowfont{\tt}
$m'=79$ &          & 1.40e-05 && $m'=133$ &          & 1.93e-06\\
\cline{1-3}\cline{5-7}
\end{tabu}

\vskip10pt
\begin{tabu}{ccccccc}
\multicolumn{3}{l}{$f(x)=\tanh(\sqrt{x})$}\\
\cline{1-3}\cline{5-7}
\multicolumn{3}{c}{Error tolerance {\tt$\epsilon$ = 1.00e-04}} &&
\multicolumn{3}{c}{Error tolerance {\tt$\epsilon$ = 1.00e-05}}\\
\cline{1-3}\cline{5-7}
\# Lan. & Cum.  & Bilin. && \# Lan. & Cum.  & Bilin.\\
steps   & error & error  && steps   & error & error\\
\cline{1-3}\cline{5-7}
\rowfont{\tt}
$m=12$  & 8.16e-05 & 9.37e-05 &&  $m=24$ & 9.11e-06 & 1.05e-05\\
\rowfont{\tt}
$m'=22$ &          & 1.40e-05 && $m'=41$ &          & 1.55e-06\\
\cline{1-3}\cline{5-7}
\end{tabu}
\end{table}

We further illustrate in  Table~\ref{tab:bilinear.form.err} the use of
the cumulative error $d_{m,m'}^K$ as  an estimate of the bilinear form
error $\rho_m^K$, for monitoring  convergence. Let the error tolerance
be    $\epsilon$.   We    compute    the   first    $m$   such    that
$|d_{m,m'}^K|\le\epsilon$. In each  of the subtable, the  last but one
row  gives $m$  and the  last  row gives  $m'$.  As can  be seen,  the
cumulative error $|d_{m,m'}^K|$, as an  estimate, is very close to the
bilinear form error $|\rho_m^K|$ and the tolerance $\epsilon$. Even if
$|\rho_m^K|$ may  be slightly larger than  $\epsilon$, $|\rho_{m'}^K|$
is  sufficiently  smaller than  it.  
Thus,   terminating the 
Lanczos iteration at step $m'$ yields a good  approximation to
  the bilinear form and, in fact,  $m$ steps 
suffice for the  approximation.  Moreover, in  all cases
$m'$  is less  than twice $m$,  confirming that  $m'$ is  not much
larger than $m$.
\fi

\subsection{Overall error estimation with confidence interval}
\label{sec:exp.lap2d.overall}

With the  preparation of the  preceding two subsections, we  now apply
Theorem~\ref{thm:bound}  to  establish  confidence intervals  for  the
approximation of  $\tr(f(A))$. To this end,  we fix the number  $N$ of
random vectors  to be $100$ and  set $\alpha=3$, which ensures  a high
probability $p_{\alpha}\approx99.73\%$. We vary the size of the matrix
$A$  (and  hence the  condition  number)  by using  progressively
larger grids. The setting of the number of quadrature points, $K$, and the Lanczos tolerance $\delta$ follows Section~\ref{sec:param}.

We   perform   the  computations   and   summarize   the  results   in
Tables~\ref{tab:estim.with.conf.intv}
and~\ref{tab:estim.with.conf.intv.2}. As the  grid becomes larger, the
condition number  of $A$  increases, which
  leads to a larger $\delta$  and $K$. Interestingly, for the largest
grid (which  corresponds to  $n\approx10^6$), $K=3$  quadrature points
are sufficient for the exponential,  and for other functions, $K$ does
not  exceed  two dozens.  Then,  the  resulting accuracy  of the rational
approximation is five to six digits. The number of Lanczos steps, $m$,
is as small  as $6$ for the  exponential and no greater  than $34$ for
the  logarithm,  on  average.  Moreover,  the  approximated  trace  is
generally three-  to four-digit  accurate, and  the half-width  of the
confidence interval is  generally a few times  the  actual error (in
several cases,  mostly for large problems,  it is less than  twice the
actual  error). As  expected, the  time  for estimating  the error  is
negligible compared with that for approximating the trace.

Note that for this problem, the implementation of the Lanczos
algorithm does not affect timing much, even using full
  reorthogonalization. It turns out that the accuracies are barely
  affected by the loss of orthogonality, possibly because 2D
  Laplacians are easy to handle. For later experiments, however,
  reorthogonalization is crucial because the approximation error
  substantially degrades without it (see an illustration in the
  appendix). In these experiments, the matrix may be much larger and
  Lanczos converges more slowly. Hence, to gain time efficiency, it
will be  beneficial to replace  the simple full reorthogonalization therein
by  a more sophisticated scheme such as partial
reorthogonalization~\cite{Simon1984,Fang2012}.  Therefore, we
implemented and used partial reorthogonalization for all experiments
in this paper. For more details on the implementation and the machine
setting, see the next section.

\begin{table}[ht]
\caption{Approximately computing $\tr(f(A))$ with $99.73\%$ confidence interval for 2D Laplacian matrix $A$. In all cases, the number $N$ of random vectors is $100$.}
\label{tab:estim.with.conf.intv}
\centering\tt

\begin{tabular}{>{\rm}lccc}
$f(x)=\exp(-x)$ \\
\hline
Grid size & 90$\times$120 & 300$\times$400 & 900$\times$1200\\
\hline
\# Quadrature points, $K$           & 2          & 3         & 3\\
Rational approx. error              & 1.72e-04   & 2.01e-06  & 2.01e-06\\
\hline
Lanczos tolerance $\delta$          & 8.31       & 26.1      & 71\\
Average \# of Lan. steps, $m$       & 5          & 5         & 6\\
\hline
Truth $\mu=\tr(f(A))$               &    1014.96 &   11378.0 &   102662\\
Approximation result $\bar{x}^{(m)}$ &    1016.38 &   11367.3 &   102630\\
99.73\% Confidence interval         & $\pm$19.14 & $\pm$60.1 & $\pm$164\\
\hline
Time approximation (seconds)        & 0.28       & 2.12      & 21.75\\
Time error estimate (seconds)       & 0.01       & 0.02      &  0.04\\
\hline
\end{tabular}

\vskip10pt
\begin{tabular}{>{\rm}lccc}
$f(x)=\sqrt{x}$ \\
\hline
Grid size & 90$\times$120 & 300$\times$400 & 900$\times$1200\\
\hline
\# Quadrature points, $K$           & 6         & 8        & 10\\
Rational approx. error              & 2.71e-04  & 9.65e-05 & 3.99e-05\\
\hline
Lanczos tolerance $\delta$          & 25.1      & 80       & 220\\
Average \# of Lan. steps, $m$       & 5.04      & 7.07     & 10.01\\
\hline
Truth $\mu=\tr(f(A))$               &   20708.0 &   229986 & 2.06961e+06\\
Approximation result $\bar{x}^{(m)}$ &   20715.9 &   230071 & 2.06984e+06\\
99.73\% Confidence interval         & $\pm$57.7 & $\pm$185 & $\pm$507\\
\hline
Time approximation (seconds)        & 0.43      & 4.56     & 53.39\\
Time error estimate (seconds)       & 0.01      & 0.04     &  0.10\\
\hline
\end{tabular}
\end{table}

\begin{table}[ht]
\caption{(Continued from Table~\ref{tab:estim.with.conf.intv}) Approximately computing $\tr(f(A))$ with $99.73\%$ confidence interval for 2D Laplacian matrix $A$. In all cases, the number $N$ of random vectors is $100$.}
\label{tab:estim.with.conf.intv.2}
\centering\tt

\begin{tabular}{>{\rm}lccc}
$f(x)=\log(x)$ \\
\hline
Grid size & 90$\times$120 & 300$\times$400 & 900$\times$1200\\
\hline
\# Quadrature points, $K$           & 9         & 10       & 14\\
Rational approx. error              & 2.82e-04  & 6.56e-04 & 4.64e-05\\
\hline
Lanczos tolerance $\delta$          & 38.0      & 120      & 314\\
Average \# of Lan. steps, $m$       & 10.16     & 18.19    & 33.29\\
\hline
Truth $\mu=\tr(f(A))$               &   12652.9 &   140146 & 1.26014e+06\\
Approximation result $\bar{x}^{(m)}$ &   12672.4 &   140319 & 1.26060e+06\\
99.73\% Confidence interval         & $\pm$87.5 & $\pm$277 & $\pm$723\\
\hline
Time approximation (seconds)        & 0.90      & 13.11    & 194.38\\
Time error estimate (seconds)       & 0.04      &  0.16    &   0.49\\
\hline
\end{tabular}

\vskip10pt
\begin{tabular}{>{\rm}lccc}
$f(x)=\tanh(\sqrt{x})$ \\
\hline
Grid size & 90$\times$120 & 300$\times$400 & 900$\times$1200\\
\hline
\# Quadrature points, $K$           & 12         & 15       & 20\\
Rational approx. error              & 6.84e-05   & 3.68e-05 & 9.77e-06\\
\hline
Lanczos tolerance $\delta$          & 5.73       & 18       & 48\\
Average \# of Lan. steps, $m$       & 8.00       & 11.25    & 16.17\\
\hline
Truth $\mu=\tr(f(A))$               &    9928.62 &  110240  &   991960\\
Approximation result $\bar{x}^{(m)}$ &    9930.14 &  110261  &   992025\\
99.73\% Confidence interval         & $\pm$13.13 & $\pm$41  & $\pm$110\\
\hline
Time approximation (seconds)        & 0.66       & 6.49     & 83.08\\
Time error estimate (seconds)       & 0.03       & 0.08     &  0.19\\
\hline
\end{tabular}
\end{table}

\section{Experiments with covariance matrices}\label{sec:cov}
In  this  section, we  present  experiments  with covariance  matrices
encountered              in              Gaussian              process
analysis~\cite{Stein1999,Rasmussen2006,GP-SAA,score.func}.  A Gaussian
process is a  stochastic process with Gaussian  properties. Central to
the mathematical tool is a covariance kernel function that generates a
covariance  matrix  $A$ for  sampling  sites,  where the  observations
collectively follow a multivariate normal distribution with covariance
$A$. Many  tasks, including hyperparameter estimation  and prediction,
require  a computation  with the  matrix $A$.  Here, we  focus on  the
log-determinant  term that  appears  in  the Gaussian  log-likelihood,
which needs to be optimized  for estimating the hyperparameters of the
process. Clearly, for a symmetric positive-definite matrix $A$,
\[
\log\det(A)=\tr(\log(A)).
\]

For demonstration, we will use the Mat\'{e}rn kernel function plus a nugget
\[
\phi(r)=\frac{(\sqrt{2\nu}r)^{\nu}\besselk_{\nu}(\sqrt{2\nu}r)}{2^{\nu-1}\Gamma(\nu)}
+\tau\cdot\delta(r=0)
\quad\text{with}\quad
r=\sqrt{\sum_{i=1}^d\frac{(x_i-x'_i)^2}{\ell_i^2}}
\]
as  an  example.  Here  $x=[x_1,x_2,\ldots,x_d]$  denotes  a  site  in
$\real^d$, $r$ denotes  the elliptical distance between  two sites $x$
and  $x'$  with  elliptical  scaling  $[\ell_1,\ell_2,\ldots,\ell_d]$,
$\besselk_{\nu}$ is the modified Bessel function of the second kind of
order  $\nu$, $\Gamma(\nu)$ is the Gamma function, $\delta(r=0)$  is the  Kronecker delta  taking $1$  when
$r=0$ and  $0$ otherwise, and  $\tau$ is the  size of the  nugget. The
Mat\'{e}rn  kernel (even  without  the  nugget) is  strictly positive-definite,
meaning that  the generated matrix  $A=[\phi(x-x')]$ for all  pairs of
sites  $x$ and  $x'$ is  positive-definite. The  Mat\'{e}rn kernel  is
even, achieves its maximum $1$ at  the origin, and monotonically decreases
when $r>0$.

We assume that the sites are located on a regular grid of size $n_1\times n_2$ and set the scaling parameters to be $\ell_1=0.4\times n_2$ and $\ell_2=0.4\times n_1$. We also set the smoothness parameter $\nu=1.5$ and the nugget $\tau=10^{-5}$. With a regular grid structure, matrix-vector multiplications with $A$ has an $O(n)$ memory and $O(n\log n)$ time cost, although $A$ is fully dense, because the multiplications may be done through circulant embedding followed by fast Fourier transform (FFT)~\cite{Chan2007,cg.fft.journal}.

To make  the experiment more interesting,  we let the sites  be $10\%$
uniformly  random samples  of  the  grid (i.e.,  the  number of  sites
$n=0.1\times n_1n_2$).  Hence, strictly speaking, the  sites no longer
form  a  regular  grid;  they  are  \emph{scattered}  sites.  However,
matrix-vector multiplications may still be performed through circulant
embedding and  FFT, because  of the  underlying grid  structure. Note,
nevertheless, that the cost  
is not reduced by a factor of $10$ as is the case for the number
of sites.

Unlike  the  2D  Laplacian  in the  preceding  section,  the  spectral
information  of the  covariance  matrix is  only  partially known.  In
particular, we  know that the smallest  eigenvalue of $A$ has  a lower
bound $\tau$ (the  nugget) but do not know the  largest eigenvalue. In
theory, the largest eigenvalue grows approximately proportionally with
$n$   and  the   smallest  eigenvalue   decreases  to   $\tau$  fairly
quickly~\cite{lap.precond}. Hence, we  estimate the largest eigenvalue
by using  the Lanczos  method and  set the lower  end of  the spectrum
interval to  be $\tau$. By  using this  spectrum interval we  obtain a
rational approximation of $f(x)=\log(x)$, needed for error estimation.

As mentioned at  the end of the preceding subsection,  we implemented the
Lanczos        iteration~\eqref{eqn:lanczos}       with        partial
reorthogonalization~\cite{Simon1984,Fang2012}. The reason  is that, as
will be seen soon,  the number of Lanczos steps is  no longer as small
as those  in the case of  2D Laplacian and so  the reorthogonalization
cost is quite  high if full reorthogonalization is  used.

The program is written in Matlab and run on a laptop with eight Intel cores (CPU frequency 2.8GHz) and 32GB memory. By default, Matlab uses four threads in many built-in functions, but we observe that at most two cores are active during the computation.

In Table~\ref{tab:estim.with.conf.intv.covar}, we summarize the
computation results for varying grid sizes from $160\times90$ to
$1600\times900$.  For the first two grids, performing spectral
decomposition is affordable and hence we also compute the ground-truth
condition numbers and log-determinants. As can be seen, using the
nugget $\tau$ (lower bound) as an estimate of the smallest eigenvalue
suffices for indicating the magnitude of the condition number.  As
expected, the condition number grows approximately by a factor of $10$
every time we increase the grid size by this factor.  With an
increasing condition number, the log-determinant is harder to compute,
requiring more Lanczos steps. Note that the scale of this number---in
the hundreds---is much larger than that for the 2D Laplacian. Taking
another factor into account, namely the matrix size, it takes quite
some time to finish the computation (for the largest grid, several
hours), although the costs of the spectrum estimation and error
estimation are negligible.  The benefit, on the other hand, is that we
have a useful error bound for the approximated trace, which gives a
confidence in the computation which would have been impossible without
a reliable error estimate.

\begin{table}[ht]
\caption{Approximately computing $\log\det(A)$ with $99.73\%$ confidence interval for covariance matrix $A$. In all cases, the number $N$ of random vectors is $100$.}
\label{tab:estim.with.conf.intv.covar}
\centering\tt
\begin{tabular}{>{\rm}lccc}
\hline
Grid size & 160$\times$90 & 500$\times$300 & 1600$\times$900\\
\hline
Condition number (truth)            & 4.08e+07  & 5.54e+08 & ---\\
Condition number (estimated)        & 5.17e+07  & 5.60e+08 & 5.22787e+09\\
\hline
\# Quadrature points, $K$           & 12        & 15       & 18\\
Rational approx. error              & 5.19e-03  & 1.39e-03 & 4.18e-04\\
\hline
Lanczos tolerance $\delta$          & 40.5      & 99       & 288\\
Average \# of Lan. steps, $m$       & 103       & 240      & 425\\
\hline
Truth $\mu=\log\det(A)$             &  -10844.7 & -151826  & ---\\
Approximation result $\bar{x}^{(m)}$ &  -10794.3 & -151715  & -1.60122e+06\\
99.73\% Confidence interval         & $\pm$92.6 & $\pm$228 & $\pm$480\\
\hline
Time spectrum estim. (seconds)      &  0.04     &   0.3    &     3\\
Time trace approx. (seconds)        & 26.87     & 781.6    & 14253\\
Time error estimate (seconds)       &  0.43     &   1.9    &     3\\
\hline
\end{tabular}
\end{table}

\section{Concluding remarks}
In  this  work,  we  proposed  two  error  estimates  related  to  the
computation  of $f(A)$:  one for  its bilinear  form and  one for  its
trace. The bilinear form  is a building block of the  trace in a Monte
Carlo-type   approximation.   We   focused   on   the   symmetric
positive-definite  case  for  $A$,  where the Lanczos algorithm  has  long  been  a
preferred iterative  method and where a  representative application is
the  covariance matrix,  whose log-determinant  (i.e., $f(x)=\log(x)$)
constitutes a significant computational  component of Gaussian process
analysis.

The bilinear form $v_1^Tf(A)v_1$  is approximated by $e_1^Tf(T_m)e_1$,
where $v_1$,  if normalized,  is the starting  vector of  Lanczos, and
$T_m$  is  the   tridiagonal  matrix  resulting  from   $m$  steps  of
the Lanczos process.  The  approximation  error  is  gauged  through  economically
accumulating                     incremental                    errors
$d_k=e_1^Tf(T_{k+1})e_1-e_1^Tf(T_k)e_1$    for    $k=m,m+1,m+2\ldots$,
because eventually  $v_1^Tf(A)v_1$ is nothing but  $e_1^Tf(T_n)e_1$ if
$A$ has a size $n\times n$. The challenging question is how many terms
one should accumulate  in order to obtain a  reasonable, yet economic,
estimate  of  the approximation  error,  because  these terms  require
running extra Lanczos  iterations beyond the $m$-th  one. Our proposal
is that one should accumulate  till $k=m'-1$, where $|d_{m'}/d_m|$ falls
under  some threshold  $t$, a  reasonable choice  being $0.1$.  Such a
proposal is motivated by two facts: (i) if the even derivatives of $f$
have the same sign, then so are the incremental errors $d_k$; and (ii)
if $f$  is analytic in the  spectrum interval of $A$  and analytically
continuable inside  an open Bernstein  ellipse whose foci are  the two
ends  of  the interval,  then  Lanczos  converges exponentially.  Many
functions  in practical  applications  for positive-definite  matrices
incidentally meet these two  criteria, including those used in our
experiments:    $\exp(-x)$,    $\log(x)$,   and    $x^{\alpha}$    for
$\alpha<1$.  This  leaves room  for  future  investigations as to whether  the
proposal is more generally applicable  to functions without  the constant-sign
property for its even derivatives.

In   retrospect,   our  proposal   resembles   that   of  Frommer   et
al.~\cite{Frommer2013} in several ways. Even though their work focused
on  rational functions,  our work  in effect  also relies  on rational
approximations  of a  general  function.  Their \emph{Lanczos  restart
  recovery} strategy corresponds to our  idea of running extra Lanczos
iterations. A crucial distinction, 
however, is that  the number of extra Lanczos iterations
in   our    case   is determined implicitly   by    the   requirement
$|d_{m'}/d_m|\le0.1$, whereas this number  needs be
prescribed in advance in Frommer et al.~\cite{Frommer2013}
The benefit  of an  implicit determination  over a  prescribed
one  is  a   much  more  accurate  estimate,   independent  of  the
convergence speed. On the other hand,
prescribing  a good number of extra Lanczos steps for  sharp estimates
likely requires an a priori knowledge of the convergence behavior.

The second  contribution of our  work is  the error estimation  of the
trace. The trace  $\tr(f(A))$ is approximated by a  Monte Carlo sample
average of  $u_i^Tf(A)u_i$, where  the $u_i$'s are  independent random
vectors  and  each  sample   $u_i^Tf(A)u_i$  is  unbiased.  Therefore,  the
approximation error  generally follows basic estimation  theory, where
confidence intervals are established as a means for bounding the error
in a (high) probability. The distinction, however, is that the samples
(the bilinear  forms) are only  approximately computed and  hence they
bear a numerical bias. Our strategy  is to impose a bound $\delta$ (as
a  stopping criterion  for  the bilinear  form  approximation) on  the
numerical bias and inject $\delta$  into the confidence interval. Such
a treatment  is quite  general, overcoming the  limitation of  a prior
work~\cite{Hutchinson} applicable to  only multivariate normal vectors
$u_i$. A  restriction, on  the other  hand, is that  the setting  of a
reasonable $\delta$ is relatively  blind before computation, whereas
the  prior  work proposes  directly  the  tolerance that  matches  the
numerical error with the statistical error.
If one wants a $\delta$  that makes these
two sources  of errors  comparable in  the framework  of this  work, a
practical  approach  is  to  
run a precomputation (the same as Algorithm~\ref{algo:hutchinson1} but without error estimate),  get  an
approximate  standard  error of the  samples,  and  follow
Corollary~\ref{cor:bound}.

\section*{Acknowledgments}
We are thankful to the anonymous referees whose comments help improve
the paper. In particular, one referee pointed out an alternative
derivation of the algorithm for computing the incremental error,
mentioned at the end of Section~\ref{sec:incre.err}.  Jie Chen was
supported by the XDATA program of the Defense Advanced Research
Projects Agency (DARPA), administered through Air Force Research
Laboratory contract FA8750-12-C-0323.  Yousef Saad was supported by
NSF grant CCF-1318597.

\appendix
\section{Effects of loss of orthogonality}
We illustrate in Figure~\ref{fig:orth} that the Lanczos convergence for $v_1^T\log(A)v_1$ substantially degrades without reorthogonalization.

\begin{figure}[ht]
\centering
\includegraphics[width=.48\linewidth]{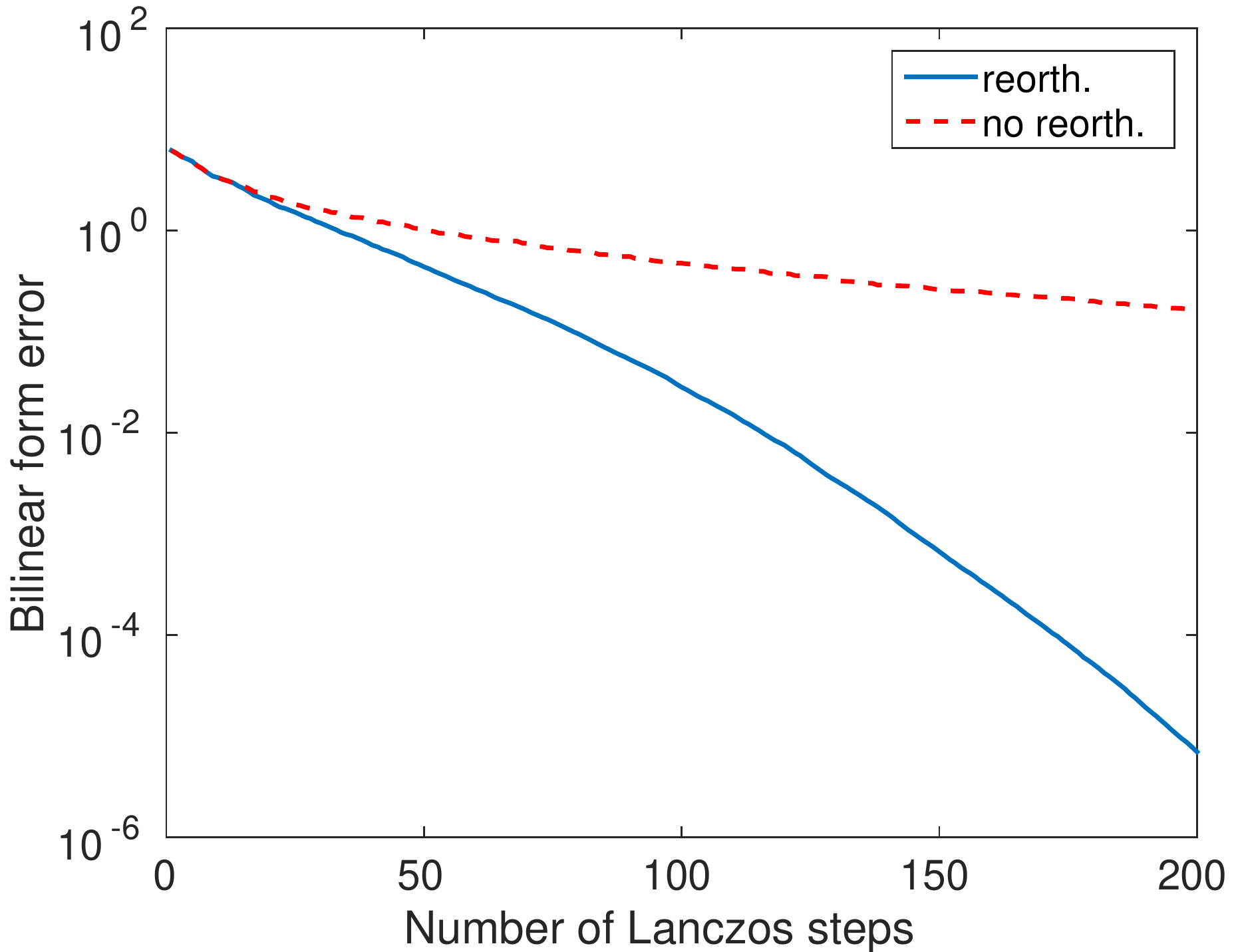}
\caption{Convergence history of $v_1^T\log(A)v_1$ for $A$ defined in Section~\ref{sec:cov} on a $90\times120$ grid.}
\label{fig:orth}
\end{figure}

\bibliographystyle{plain}
\bibliography{reference}

\begin{thebibliography}{10}

\bibitem{GP-SAA}
M.~Anitescu, J.~Chen, and L.~Wang.
\newblock A matrix-free approach for solving the parametric {G}aussian process
  maximum likelihood problem.
\newblock {\em SIAM J. Sci. Comput.}, 34(1):A240--A262, 2012.

\bibitem{Avron2011}
H.~Avron and S.~Toledo.
\newblock Randomized algorithms for estimating the trace of an implicit
  symmetric positive semi-definite matrix.
\newblock {\em J. Assoc. Comput. Mach.}, 58(2), 2011.

\bibitem{Bai1996}
Z.~Bai, G.~Fahey, and G.~Golub.
\newblock Some large-scale matrix computation problems.
\newblock {\em J. Comput. Appl. Math.}, 74(1-2):71--89, 1996.

\bibitem{Bekas2007}
C.~Bekas, E.~Kokiopoulou, and Y.~Saad.
\newblock An estimator for the diagonal of a matrix.
\newblock {\em Appl. Numer. Math.}, 57(11-12):1214--1229, 2007.

\bibitem{Bellalij2015}
M.~Bellalij, L.~Reichel, G.~Rodriguez, and H.~Sadok.
\newblock Bounding matrix functionals via partial global block {L}anczos
  decomposition.
\newblock {\em Appl. Numer. Math.}, 94:127--139, 2015.

\bibitem{Brezinski2012}
C.~Brezinski, P.~Fika, and M.~Mitrouli.
\newblock Estimations of the trace of powers of positive self-adjoint operators
  by extrapolation of the moments.
\newblock {\em Electronic Transactions on Numerical Analysis}, 39:144--155,
  2012.

\bibitem{Chan2007}
R.~H. Chan and X.-Q. Jin.
\newblock {\em An Introduction to Iterative {T}oeplitz Solvers}.
\newblock SIAM, 2007.

\bibitem{lap.precond}
J.~Chen.
\newblock On the use of discrete {L}aplace operator for preconditioning kernel
  matrices.
\newblock {\em SIAM J. Sci. Comput.}, 35(2):A577--A602, 2013.

\bibitem{Hutchinson}
J.~Chen.
\newblock How accurately should {I} compute implicit matrix-vector products
  when applying the {H}utchinson trace estimator?
\newblock {\em SIAM J. Sci. Comput.}, 38(6):A3515--A3539, 2016.

\bibitem{cg.fft.journal}
J.~Chen, T.~L.~H. Li, and M.~Anitescu.
\newblock A parallel linear solver for multilevel {T}oeplitz systems with
  possibly several right-hand sides.
\newblock {\em Parallel Comput.}, 40(8):408--424, 2014.

\bibitem{Estrad2000}
E.~Estrada.
\newblock Characterization of {3D} molecular structure.
\newblock {\em Chemical Physics Letters}, 319(5--6):713--718, 2000.

\bibitem{Fang2012}
H.-R. Fang and Y.~Saad.
\newblock A filtered {L}anczos procedure for extreme and interior eigenvalue
  problems.
\newblock {\em SIAM J. Sci. Comput.}, 34(4):A2220--A2246, 2012.

\bibitem{Fika2016}
P.~Fika and M.~Mitrouli.
\newblock Estimation of the bilinear form {$y^*f(A)x$} for {H}ermitian
  matrices.
\newblock {\em Linear Algebra and its Applications}, 502:140--158, 2016.

\bibitem{Fika2016a}
P.~Fika, M.~Mitrouli, and P.~Roupa.
\newblock Estimating the diagonal of matrix functions.
\newblock {\em Mathematical Methods in the Applied Sciences}, 2016.

\bibitem{Frommer2013}
A.~Frommer, K.~Kahl, Th. Lippert, and H.~Rittich.
\newblock 2-norm error bounds and estimates for {L}anczos approximations to
  linear systems and rational matrix functions.
\newblock {\em SIAM J. Matrix Anal. Appl.}, 34(3):1046--1065, 2013.

\bibitem{Frommer2009}
A.~Frommer and V.~Simoncini.
\newblock Error bounds for {L}anczos approximations of rational functions of
  matrices.
\newblock In {\em Numerical Validation in Current Hardware Architectures},
  volume 5492 of {\em Lecture Notes in Computer Science}, pages 203--216.
  Springer Berlin Heidelberg, 2009.

\bibitem{Girard1987}
D~Girard.
\newblock Un algorithme simple et rapide pour la validation crois\'{e}e
  g\'{e}n\'{e}ralis\'{e}e sur des probl\`{e}mes de grande taille.
\newblock Technical Report RR 669-M, Inf. et Math. Appl. de Grenoble, Grenoble,
  France, 1987.

\bibitem{Golub2009}
G.~H. Golub and G.~Meurant.
\newblock {\em Matrices, Moments and Quadrature with Applications}.
\newblock Princeton University Press, 2009.

\bibitem{Golub1997}
G.~H. Golub and U.~von Matt.
\newblock Generalized cross-validation for large-scale problems.
\newblock {\em ‎J. Comp. Graph. Stat.}, 6(1):1--34, 1997.

\bibitem{Golub1969}
G.H. Golub and J.H. Welsch.
\newblock Calculation of {G}auss quadrature rules.
\newblock {\em Math. Comp.}, 23:221--230, 1969.

\bibitem{Hale2008}
N.~Hale, N.~J. Higham, and L.~N. Trefethen.
\newblock Computing {$A^\alpha, \log(A)$}, and related matrix functions by
  contour integrals.
\newblock {\em SIAM J. Numer. Anal.}, 46(5):2505--2523, 2008.

\bibitem{Han2016}
I.~Han, D.~Malioutov, H.~Avron, and J.~Shin.
\newblock Approximating the spectral sums of large-scale matrices using
  {C}hebyshev approximations.
\newblock arXiv:1606.00942, 2016.

\bibitem{Han2015}
I.~Han, D.~Malioutov, and J.~Shin.
\newblock Large-scale log-determinant computation through stochastic
  {C}hebyshev expansions.
\newblock In {\em Proceedings of the 32nd International Conference on Machine
  Learning}, 2015.

\bibitem{Hutchinson1990}
M.~F. Hutchinson.
\newblock A stochastic estimator of the trace of the influence matrix for
  {L}aplacian smoothing splines.
\newblock {\em Communications in Statistics -- Simulation and Computation},
  19:433--450, 1990.

\bibitem{Lagomasino2008}
G.~L\'{o}pez Lagomasino, L.~Reichel, and L.~Wunderlich.
\newblock Matrices, moments, and rational quadrature.
\newblock {\em Linear Algebra Appl.}, 429(10):2540--2554, 2008.

\bibitem{Lin2015}
L.~Lin.
\newblock Randomized estimation of spectral densities of large matrices made
  accurate.
\newblock {\em Numerische Mathematik}, 136(1):183--213, 2017.

\bibitem{Lininpress}
L.~Lin, Y.~Saad, and C.~Yang.
\newblock Approximating spectral densities of large matrices.
\newblock {\em SIAM Rev.}, in press.

\bibitem{Meurant2009}
C.~Meurant.
\newblock Estimates of the trace of the inverse of a symmetric matrix using the
  modified {C}hebyshev algorithm.
\newblock {\em Numerical Algorithms}, 51(3):309--318, 2009.

\bibitem{Rasmussen2006}
C.~E. Rasmussen and C.~K.~I. Williams.
\newblock {\em Gaussian Processes for Machine Learning}.
\newblock The MIT Press, 2006.

\bibitem{Roosta-Khorasani2015}
F.~Roosta-Khorasani and U.~Ascher.
\newblock Improved bounds on sample size for implicit matrix trace estimators.
\newblock {\em Foundations of Computational Mathematics}, 15(5):1187--1212,
  2015.

\bibitem{Saad1992}
Y.~Saad.
\newblock Analysis of some {K}rylov subspace approximations to the matrix
  exponential operator.
\newblock {\em SIAM J. Numer. Anal.}, 29(1):209--228, 1992.

\bibitem{Saad2003}
Y.~Saad.
\newblock {\em Iterative Methods for Sparse Linear Systems}.
\newblock SIAM, 2nd edition, 2003.

\bibitem{Simon1984}
H.~D. Simon.
\newblock The {L}anczos algorithm with partial reorthogonalization.
\newblock {\em Math. Comp.}, 42(165):115--142, 1984.

\bibitem{Stathopoulos2013}
A.~Stathopoulos, J.~Laeuchli, and K.~Orginos.
\newblock Hierarchical probing for estimating the trace of the matrix inverse
  on toroidal lattices.
\newblock {\em SIAM J. Sci. Comput.}, 35(5):S299--S322, 2013.

\bibitem{Stein1999}
M.~L. Stein.
\newblock {\em Interpolation of Spatial Data: Some Theory for Kriging}.
\newblock Springer, 1999.

\bibitem{score.func}
M.~L. Stein, J.~Chen, and M.~Anitescu.
\newblock Stochastic approximation of score functions for {G}aussian processes.
\newblock {\em Annals of Applied Statistics}, 7(2):1162--1191, 2013.

\bibitem{Trefethen2012}
L.~N. Trefethen.
\newblock {\em Approximation Theory and Approximation Practice}.
\newblock Society for Industrial and Applied Mathematics, 2012.

\bibitem{Trefethen2006}
L.~N. Trefethen, J.A.C. Weideman, and T.~Schmelzer.
\newblock Talbot quadratures and rational approximations.
\newblock {\em BIT Numerical Mathematics}, 46(653--670), 2006.

\bibitem{Wimmer2014}
K.~Wimmer, Y.~Wu, and P.~Zhang.
\newblock Optimal query complexity for estimating the trace of a matrix.
\newblock In {\em Proceedings of the 41st International Colloquium on Automata,
  Languages and Programming}, 2014.

\bibitem{Wu2016}
L.~Wu, A.~Stathopoulos, J.~Laeuchli, V.~Kalantzis, and E.~Gallopoulos.
\newblock Estimating the trace of the matrix inverse by interpolating from the
  diagonal of an approximate inverse.
\newblock {\em Journal of Computational Physics}, 326(1):828--844, 2016.

\end{thebibliography}

\end{document}